\makeatother \theoremstyle{remark}
\numberwithin{equation}{section}
\theoremstyle{definition} 
\theoremstyle{definition}
\newtheorem{proposition}{Proposition}
\newtheorem{theorem}{Theorem}
\newtheorem{lemma}{Lemma}
\newtheorem{corollary}{Corollary}
\newtheorem{remark}{Remark}
\newtheorem{conjecture}{Conjecture}
\newcommand{\g}{\mathfrak{g}}
\newcommand{\n}{\mathfrak{n}}
\newcommand{\ra}{\rightarrow}
\newcommand{\vp}{\varphi}
\newcommand{\ts}{\otimes}
\newcommand{\mk}{\mathbb{K}}
\newcommand{\Hom}{\operatorname*{Hom}}
\newcommand{\lie}{\mathfrak}
\newcommand{\rep}{\operatorname*{rep}}
\newcommand{\Ext}{{\operatorname*{Ext}}}
\newcommand{\Rep}{\operatorname*{Rep}}
\newcommand{\bino}[2]{\left[\genfrac{}{}{0pt}{}{#1}{#2}\right]}
\begin{document}


\date{\today}



\title{Cones from quantum groups to tropical flag varieties}

\author{Xin Fang, Ghislain Fourier, Markus Reineke}
\address{Xin Fang:\newline
University of Cologne, Mathematical Institute, Weyertal 86--90, 50931 Cologne, Germany}
\email{xinfang.math@gmail.com}
\address{Ghislain Fourier:\newline
RWTH Aachen University, Pontdriesch 10--16, 52062 Aachen}
\email{fourier@mathb.rwth-aachen.de}
\address{Markus Reineke:\newline
Ruhr-Universit\"at Bochum, Faculty of Mathematics, Universit\"atsstra{\ss}e 150, 44780 Bochum, Germany}
\email{markus.reineke@rub.de}

\begin{abstract} We relate quantum degree cones, parametrizing PBW degenerations of quantized enveloping algebras, to (negative tight monomial) cones introduced by Lusztig in the study of monomials in canonical bases, to $K$-theoretic cones for quiver representations, and to some maximal prime cones in tropical flag varieties.
\end{abstract}

\maketitle

\section{Introduction}

The aim of this work is to point out a rather unexpected relation between two substantially different degeneration processes arising in the algebraic Lie theory.

The first is the study of toric degenerations of flag varieties $G/B$ of complex semisimple Lie algebras, the totality of which is encoded in tropical flag varieties with respect to Pl\"ucker embeddings. The second is the PBW-type degenerations of quantized enveloping algebras $U_q(\mathfrak{g})$ associated to complex semisimple Lie algebras. 

Our link between these two processes is provided by a class of polyhedral cones $\mathcal{D}_{\underline{w}_0}^q(\g)$, called {\it quantum degree cones} \cite{BFF,FFR16}, depending on the complex simple Lie algebra $\mathfrak{g}$ and a choice of $\underline{w}_0$, a reduced decomposition of the longest element in the Weyl group of $\mathfrak{g}$. For such a fixed reduced decomposition, the negative part $U_q(\mathfrak{n}^-)$ of the quantized enveloping algebra is generated by the quantum PBW root vectors with respect to some non-commutative straightening relations. The quantum degree cone is a kind of Gr\"obner fan in this non-commutative setup, where the monomial ordering is encoded in $\underline{w}_0$. Interior points of such cones give non-commutative degenerations of $U_q(\mathfrak{n}^-)$ to skew-polynomial algebras. As in the commutative Gr\"obner theory, due to lacking of knowledge on the straightening relations, the facets of the quantum degree cones are usually hard to describe.

The first main result of the present work is an embedding (conjectured to be an equality) of the quantum degree cone into the {\it negative tight monomial cone} of the Langlands dual Lie algebra, which is a variant of a cone introduced by Lusztig \cite{Lus2} for the identification of monomial elements in the canonical basis of $U_q(\mathfrak{n}^-)$.  This result is based on a detailed analysis of a result of Levendorskii and Soibelman \cite{LS1} on commutators of quantum PBW root vectors, for which we identify a certain PBW basis element appearing in this commutator with non-vanishing coefficient.

The second main result is a proof of the conjecture for reduced decompositions adapted to a Dynkin quiver. For the proof, we interpret the cone $\mathcal{D}_{\underline{w}_0}^q(\g)$ in terms of the $K$-theory of quiver representations, and use known results on degenerations and extensions of representations of Dynkin quivers \cite{B}.

Tropical flag varieties \cite{BLMM} are subfans of Gr\"obner fans of flag varieties with respect to Pl\"ucker embeddings. We observe that, via a result in \cite{FFFM}, the quantum degree cone arising from type $\tt A$ and a specific choice of reduced decomposition can be identified with a maximal prime cone in the corresponding tropical flag variety. This observation is quite surprising, as both cones arise from apparently different Gr\"obner theory: the straightening relations in the quantum setup become trivial after specialisation!

The authors hope that the present observation can be further developed into a more general link allowing the study of tropical flag varieties using the piece-wise linear combinatorics arising in quantized enveloping algebras.

The paper is organized as follows: in Section \ref{s2}, we first recall the construction of PBW bases in quantized enveloping algebras and state the commutation formula for root elements. Most of this section is devoted to the proof of the non-vanishing result (Theorem \ref{Thm:term}) stated above. This result allows us to relate the quantum degree cone and the negative tight monomial cone in Section \ref{s3}, and to formulate our main conjecture (Conjecture \ref{conjmain}). In Section \ref{s4} we review basic results on the representation theory of Dynkin quivers, and on the relation of their associated Hall algebras to the quantized enveloping algebra $U_q(\mathfrak{n}^-)$.
This allows us to give a $K$-theoretic interpretation of the quantum degree cone, and to prove our conjecture (Theorem \ref{Thm:LSQuiver}) in many cases. Finally, in Section \ref{s5} we rephrase the main result of \cite{FFFM} in the present language, identifying a maximal prime cone in the tropical flag variety with a particular quantum degree cone.

\vskip 10pt
\textbf{Acknowledgement.} X.F. would like to thank Gleb Koshevoy on discussions related to the tight monomial cones.

\section{Commutation relations of quantum PBW root vectors}\label{s2}

\subsection{Basics on Lie algebras and quantum groups}
Let $\g$ be a simple Lie algebra of rank $n$, with Cartan matrix $A=(a_{ij})_{n\times n}$. Let $\alpha_1,\ldots,\alpha_n$ be the simple roots and $\Delta_+$ be the set of positive roots. The root lattice will be denoted by $\mathcal{Q}$ and the weight lattice by $\mathcal{P}$. Let $(-,-)$ be the $W$-invariant scalar product on the weight lattice such that: for any short root $\alpha$, $(\alpha,\alpha)=2$; when $\g=\tt B_n, \tt C_n, \tt F_4$, for any long root $\beta$, $(\beta,\beta)=4$; for $\g=\tt G_2$ and the long root $\gamma$, $(\gamma,\gamma)=6$.
\par
For a positive root $\alpha\in\Delta_+$, we denote $d_\alpha:=(\alpha,\alpha)/2$, and for any simple root $\alpha_i$, $d_i:=d_{\alpha_i}$. Let $\alpha^\vee=d_\alpha^{-1}\alpha=2\alpha/(\alpha,\alpha)$. For $\lambda\in\mathcal{P}$ and $\alpha\in \mathcal{Q}$, $\langle\lambda,\alpha^\vee\rangle:=\frac{2(\lambda,\alpha)}{(\alpha,\alpha)}$. Then the entries of the Cartan matrix are given by: $a_{ij}=\langle\alpha_j,\alpha_i^\vee\rangle$.
\par
The simple reflections are defined by: $s_i(\mu)=\mu-\langle\mu,\alpha_i^\vee\rangle\alpha_i$. In particular, $s_i(\alpha_j)=\alpha_j-a_{ij}\alpha_i$.
\par
We fix several Lie theoretical notations. Let $W$ be the Weyl group of $\g$ generated by simple reflections $s_1,\cdots,s_n$ and the longest element $w_0\in W$. Let $R(w_0)$ be the set of all reduced decompositions of $w_0$. For a reduced expression $\underline{w}_0=s_{i_1}\cdots s_{i_N}$ where $N=\#\Delta_+$, we denote $\beta_t=s_{i_1}\cdots s_{i_{t-1}}(\alpha_{i_t})$ for $1\leq t\leq N$; then $\Delta_+=\{\beta_t\mid\ t=1,\cdots,N\}$. 
\par
Let $U_q(\g)$ be the generic quantum group associated to $\g$. It is generated as an algebra by $E_i$, $F_i$ and $K_i^{\pm 1}$ for $i=1,\cdots,n$, subject to the following relations: for $i,j=1,\cdots,n$,
$$K_iK_i^{-1}=K_i^{-1}K_i=1,\ \ K_iE_jK_i^{-1}=q_i^{a_{ij}}E_j,\ \ K_iF_jK_i^{-1}=q_i^{-a_{ij}}F_j,$$
$$E_iF_j-F_jE_i=\delta_{ij}\frac{K_i-K_i^{-1}}{q_i-q_i^{-1}},$$
and for $i\neq j$,
$$\sum_{r=0}^{1-a_{ij}}(-1)^r E_i^{(1-a_{ij}-r)}E_jE_i^{(r)}=0,\ \ \sum_{r=0}^{1-a_{ij}}(-1)^rF_i^{(1-a_{ij}-r)}F_jF_i^{(r)}=0,$$
where 
\[
q_i=q^{d_i},\, \, [n]_q!=\prod_{i=1}^n \frac{q^n-q^{-n}}{q-q^{-1}},\  \ E_i^{(n)}=\frac{E_i^n}{[n]_{q_i}!}\ \ \text{and}\ \ F_i^{(n)}=\frac{F_i^n}{[n]_{q_i}!}.
\]

Let $U_q(\mathfrak{n}^-)$ be the sub-algebra of $U_q(\g)$ generated by $F_i$ for $i=1,\cdots,n$. 
\par
Let $T_i=T_{i,1}''$, $i=1,\cdots,n$ be Lusztig's automorphisms:
$$T_i(E_i)=-F_iK_i,\ \ T_i(F_i)=-K_i^{-1}E_i,\ \ T_i(K_j)=K_jK_i^{-a_{ij}},$$
for $i=1,\cdots,n$, and for $j\neq i$,
$$T_i(E_j)=\sum_{r+s=-a_{ij}}(-1)^rq_i^{-r}E_i^{(s)}E_jE_i^{(r)},\ \ T_i(F_j)=\sum_{r+s=-a_{ij}}(-1)^rq_i^{r}F_i^{(r)}F_jF_i^{(s)}.$$
Details can be found in Chapter 37 of \cite{Lus}. The quantum PBW root vector $F_{\beta_t}$ associated to $\beta_t$ is defined by:
$$F_{\beta_t}:=T_{i_1}T_{i_2}\cdots T_{i_{t-1}}(F_{i_t})\in U_q(\n^-).$$
For $s\in\mathbb{N}$, we define 
$$F_{\beta_t}^{(s)}:=\frac{F_{\beta_t}^{s}}{[s]_{q_{i_t}}!}.$$
The PBW theorem of quantum groups says that the set
$$\{F_{\beta_1}^{(c_1)}F_{\beta_2}^{(c_2)}\cdots F_{\beta_N}^{(c_N)}\mid (c_1,\cdots,c_N)\in\mathbb{N}^N\}$$
is a linear basis of $U_q(\mathfrak{n}^-)$ (\cite{Lus}, Corollary 40.2.2).

\subsection{Commutation relations of quantum PBW root vectors}
The commutation relations of these quantum PBW root vectors are described in the following Levendorskii-Soibelman (LS for short) formula:

\begin{theorem}[\cite{LS1, dCK}]
For any $1\leq k<\ell\leq N$,
\begin{equation}\label{Eq:LS}
F_{\beta_\ell}F_{\beta_k}-q^{-(\beta_k,\beta_\ell)}F_{\beta_k}F_{\beta_\ell}=\sum_{n_{k+1},\ldots,n_{\ell-1}\geq 0}c(n_{k+1},\ldots,n_{\ell-1})F_{\beta_{k+1}}^{(n_{k+1})}\ldots F_{\beta_{\ell-1}}^{(n_{\ell-1})},
\end{equation}
where $c(n_{k+1},\ldots,n_{\ell-1})\in\mathbb{Z}[q^{\pm 1}]$.
\end{theorem}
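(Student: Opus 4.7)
The plan is to combine the PBW theorem with the subalgebra structure on $U_q(\n^-)$ induced by the reduced decomposition $\underline{w}_0$, and to reduce the remaining work to rank-two commutation relations via Lusztig's braid automorphisms. Root-lattice grading handles most of the bookkeeping; the delicate part is the elimination of boundary monomials in which either $F_{\beta_k}$ or $F_{\beta_\ell}$ still appears after subtracting the expected quasi-commutator.

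First I would establish a subalgebra lemma in the spirit of Lusztig: for every $1 \leq a \leq b \leq N$, the span
\[
U_q(\n^-)_{[a,b]} := \sspan\bigl\{\, F_{\beta_a}^{(c_a)} F_{\beta_{a+1}}^{(c_{a+1})} \cdots F_{\beta_b}^{(c_b)} \mid c_j \in \mathbb{N} \,\bigr\}
\]
is a subalgebra of $U_q(\n^-)$. For $[a,N]$, the composite $T_{i_1} \cdots T_{i_{a-1}}$ carries the negative part attached to the shifted reduced expression onto $U_q(\n^-)_{[a,N]}$; a symmetric argument at the right end of $\underline{w}_0$ yields $U_q(\n^-)_{[1,b]}$; and the PBW theorem identifies the intersection of these two subalgebras with $U_q(\n^-)_{[a,b]}$. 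Since $F_{\beta_k}, F_{\beta_\ell} \in U_q(\n^-)_{[k,\ell]}$, the product $F_{\beta_\ell} F_{\beta_k}$ has a unique expansion
\begin{equation*}
F_{\beta_\ell} F_{\beta_k} \;=\; \sum_{(c_k,\ldots,c_\ell)} a_{c_k,\ldots,c_\ell}\, F_{\beta_k}^{(c_k)} F_{\beta_{k+1}}^{(c_{k+1})} \cdots F_{\beta_\ell}^{(c_\ell)}
\end{equation*}
with coefficients in $\mathbb{Z}[q^{\pm 1}]$ --- integrality follows because $F_{\beta_\ell} F_{\beta_k}$ lies in Lusztig's integral form, whose $\mathbb{Z}[q^{\pm 1}]$-basis is precisely the PBW monomials. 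Root-lattice grading then forces $\sum_j c_j \beta_j = \beta_k + \beta_\ell$ on every surviving tuple.

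Next I would identify the diagonal boundary term $c_k = c_\ell = 1$. Applying $T_{i_{k-1}}^{-1} \cdots T_{i_1}^{-1}$ reduces the problem to $k=1$, since each $F_{\beta_t}$ with $t \geq 2$ is mapped by this composite to a PBW root vector of the shifted reduced word and therefore stays inside $U_q(\n^-)$. In the reduced problem, a direct rank-two calculation using the quantum Serre relations shows that the coefficient of $F_{\beta_1} F_{\beta_\ell}$ in the PBW expansion of $F_{\beta_\ell} F_{\beta_1}$ equals $q^{-(\beta_1,\beta_\ell)}$, and pulling the braid automorphism back gives the expected scalar $q^{-(\beta_k,\beta_\ell)}$ in the original decomposition.

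The main obstacle is eliminating the remaining monomials with $c_k \geq 2$ or $c_\ell \geq 2$, which weight considerations alone cannot rule out: when $\beta_\ell - \beta_k$ lies in the positive root cone, such tuples are weight-compatible. After reducing to $k=1$, I would invoke Lusztig's skew derivation ${}_{i_1}\!D\colon U_q(\n^-) \to U_q(\n^-)$, characterised by ${}_{i_1}\!D(F_j) = \delta_{i_1,j}$ together with a $q$-twisted Leibniz rule. Iterated application of this derivation detects the $F_{\beta_1}$-leading content of PBW monomials; evaluating it on $F_{\beta_\ell} F_{\beta_1} - q^{-(\beta_1,\beta_\ell)} F_{\beta_1} F_{\beta_\ell}$ and matching against already-established LS relations of smaller gap $\ell-k$ forces $a_{c_1,\ldots,c_\ell} = 0$ whenever $c_1 \geq 1$ outside the diagonal. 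A symmetric derivation at the opposite end of the reduced word handles the $c_\ell$ variable. This skew-derivation step, mirroring the original argument in \cite{LS1}, is the technical heart of the proof.
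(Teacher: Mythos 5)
The paper does not prove this theorem; it is attributed to Levendorskii--Soibelman \cite{LS1} and de~Concini--Kac \cite{dCK} and simply cited, so there is no internal argument to compare against. Your sketch does capture the structural ingredients shared by the cited proofs: the interval subalgebras $U_q(\n^-)_{[a,b]}$ with their PBW bases, the weight-grading constraint on the exponents, the reduction to $k=1$ via $T_{i_{k-1}}^{-1}\cdots T_{i_1}^{-1}$, and the need for a boundary-term analysis via Lusztig's skew derivations or an equivalent triangular-decomposition argument.

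There is, however, a concrete gap. The quasi-commutator coefficient $q^{-(\beta_1,\beta_\ell)}$ is not obtainable by a ``direct rank-two calculation using the quantum Serre relations'': once you have reduced to $k=1$, the root $\beta_\ell$ is an arbitrary positive root, and $F_{\beta_\ell}$ and $F_{i_1}$ do not in general lie in a common rank-two subalgebra, so there is no Serre relation between them to invoke. The standard way to extract this scalar is in fact part of the very same boundary-term analysis you defer to the end: one applies $T_{i_1}^{-1}$, uses that $T_{i_1}^{-1}(F_{i_1})$ is proportional to $K_{i_1}^{-1}E_{i_1}$, collects the $K$-conjugation twist (this is exactly where $q^{-(\beta_1,\beta_\ell)}$ arises), and then rewrites the resulting commutator $[E_{i_1},\,\cdot\,]$ in terms of the skew derivations $r_{i_1}$ and ${}_{i_1}r$. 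In other words, identifying the diagonal coefficient and killing the off-diagonal monomials with $c_1\geq 1$ or $c_\ell\geq 1$ are two outputs of the same computation; treating the former as a cheap, already-settled rank-two check before launching the skew-derivation step misstates the rank at play and also leaves the base case $\ell=k+1$ (where the right-hand side of the claimed formula is empty, so the assertion is an exact $q$-commutation of adjacent PBW root vectors) without any proof at all. You should merge these two steps into a single induction on $\ell-k$ driven by the $T_{i_1}^{-1}$-plus-skew-derivation mechanism.
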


A more general result is proved by Xi in \cite{Xi}:
\begin{theorem}[\cite{Xi}]\label{Thm:Xi}
For any $1\leq k<\ell\leq N$ and $s,t\in\mathbb{N}$, 
\begin{equation}\label{Eq:Xi}
F_{\beta_\ell}^{(s)}F_{\beta_k}^{(t)}-q^{-st(\beta_k,\beta_\ell)}F_{\beta_k}^{(t)}F_{\beta_\ell}^{(s)}=\sum_{n_{k},\ldots,n_{\ell}\geq 0,\ n_k<t,\ n_\ell<s}c(n_{k},\ldots,n_{\ell})F_{\beta_{k}}^{(n_{k})}\ldots F_{\beta_{\ell}}^{(n_{\ell})},
\end{equation}
where $c(n_{k},\ldots,n_{\ell})\in\mathbb{Z}[q^{\pm 1}]$.
\end{theorem}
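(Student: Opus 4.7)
The plan is to proceed by induction on $s+t$, with the Levendorskii--Soibelman identity \eqref{Eq:LS} serving as the base case $s=t=1$: there the right-hand sum is over $n_{k+1},\ldots,n_{\ell-1}$, so $n_k=n_\ell=0$ automatically satisfies the strict bounds. The broad strategy is to expand $F_{\beta_\ell}^{(s)}F_{\beta_k}^{(t)}$ in the restricted PBW basis of the subalgebra generated by $F_{\beta_k},\ldots,F_{\beta_\ell}$, pin down the coefficient of the ``top'' PBW monomial $F_{\beta_k}^{(t)}F_{\beta_\ell}^{(s)}$ by induction on $s+t$, and then invoke a convexity property of $\underline{w}_0$ to conclude that all surviving terms satisfy both $n_k<t$ and $n_\ell<s$.

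First I would observe that iterating \eqref{Eq:LS} shows $F_{\beta_\ell}^{(s)}F_{\beta_k}^{(t)}$ to lie in the subalgebra of $U_q(\n^-)$ generated by $F_{\beta_k},\ldots,F_{\beta_\ell}$, so it expands uniquely in the PBW basis $\{F_{\beta_k}^{(n_k)}\cdots F_{\beta_\ell}^{(n_\ell)}\}$, with each monomial of weight $t\beta_k+s\beta_\ell$. To compute the coefficient of the top monomial I would use $F_{\beta_\ell}^{(s)}=[s]_{q_{i_\ell}}^{-1}F_{\beta_\ell}F_{\beta_\ell}^{(s-1)}$ together with the inductive hypothesis for $F_{\beta_\ell}^{(s-1)}F_{\beta_k}^{(t)}$ and the identity $F_{\beta_\ell}F_{\beta_k}^{(t)}=q^{-t(\beta_k,\beta_\ell)}F_{\beta_k}^{(t)}F_{\beta_\ell}+(\text{lower})$: the top coefficients multiply to $q^{-(s-1)t(\beta_k,\beta_\ell)}\cdot q^{-t(\beta_k,\beta_\ell)}=q^{-st(\beta_k,\beta_\ell)}$, matching the second term on the left of \eqref{Eq:Xi}. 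Hence the commutator has vanishing top-bidegree contribution, and every surviving PBW monomial satisfies $(n_k,n_\ell)\neq(t,s)$.

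To upgrade this to the simultaneous strict inequalities $n_k<t$ and $n_\ell<s$, suppose a surviving term has $n_k=t$: the weight equation then reduces to $\sum_{k<j\leq\ell}n_j\beta_j=s\beta_\ell$, and the convexity property --- that $\beta_\ell$ is not a non-negative rational combination of $\beta_{k+1},\ldots,\beta_{\ell-1}$ --- forces $n_j=0$ for $k<j<\ell$ and $n_\ell=s$, contradicting $(n_k,n_\ell)\neq(t,s)$. The symmetric case $n_\ell=s$ is handled identically, yielding both inequalities.

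The principal obstacle is justifying the convexity property, for which a clean argument runs as follows: the roots $\beta_j=s_{i_1}\cdots s_{i_{j-1}}(\alpha_{i_j})$ for $j<\ell$ are exactly the positive roots sent to negative roots by $w_{\ell-1}^{-1}=(s_{i_1}\cdots s_{i_{\ell-1}})^{-1}$, whereas $w_{\ell-1}^{-1}(\beta_\ell)=\alpha_{i_\ell}$ is a positive simple root; composing any linear functional positive on all of $\Delta_+$ (e.g.\ the sum of fundamental coweights) with $w_{\ell-1}^{-1}$ therefore produces a functional that is positive on $\beta_\ell$ and strictly negative on every $\beta_j$ with $j<\ell$, so $\beta_\ell$ lies strictly outside the $\mathbb{R}_{\geq 0}$-cone of the middle roots.
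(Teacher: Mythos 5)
The paper does not prove this statement; it is cited from Xi's article, so there is no in-house argument to compare against. Judged on its own terms, your structure is sound and the crucial observation --- composing a strictly dominant functional with $w_{\ell-1}^{-1}=(s_{i_1}\cdots s_{i_{\ell-1}})^{-1}$ to separate $\beta_\ell$ from the $\mathbb{R}_{\geq 0}$-cone spanned by $\beta_{k+1},\ldots,\beta_{\ell-1}$ --- is correct and elegant. Combined with the weight equation $\sum_j n_j\beta_j = t\beta_k+s\beta_\ell$ it does indeed force any surviving monomial with $n_k=t$ to be the top monomial, which your inductive top-coefficient computation then rules out.

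Two points deserve more care. First, the ``symmetric case $n_\ell=s$'' is not handled by the same functional: $\lambda\circ w_{\ell-1}^{-1}$ is negative on \emph{all} of $\beta_k,\ldots,\beta_{\ell-1}$ and hence does not separate $\beta_k$ from the middle roots. You need instead $\lambda\circ w_k^{-1}$ with $w_k=s_{i_1}\cdots s_{i_k}$, which sends $\beta_k\mapsto -\alpha_{i_k}$ (negative) while sending $\beta_j$ for $j>k$ to positive roots; the sign pattern is then reversed but the contradiction goes through. Second, and more seriously, your argument does not address the integrality claim $c(n_k,\ldots,n_\ell)\in\mathbb{Z}[q^{\pm1}]$, which is half the content of the theorem. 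The inductive step divides by $[s]_{q_{i_\ell}}$, and while the factor cancels in the leading term, there is no reason a priori that it cancels against the coefficients of $\Sigma'$ and $\Sigma''$. One must either invoke Lusztig's result that the $\mathbb{Z}[q^{\pm1}]$-span of divided-power PBW monomials is closed under multiplication, or do the book-keeping Xi does; as written the proposal silently assumes this.
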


Since the total order $\beta_1<\beta_2<\ldots<\beta_N$ is convex, by specializing $q$ to $1$, we know that in the LS-formula \eqref{Eq:LS}, if there exists $k<r<\ell$ such that $\beta_r=\beta_k+\beta_\ell$, then the monomial $F_{\beta_r}$ appears on the right hand side of \eqref{Eq:LS} with non-zero coefficient.
\par
These are all information that can be read from the specialization $q\to 1$. It is natural to ask: what are the monomials appearing on the right hand side of \eqref{Eq:LS} with non-zero coefficients?
\par
The goal of the rest of this section is to show that particular monomials may appear in some situations. This will be applied later to study the quantum degree cones.
\par
For $1\leq k\leq N$ and $s\in\mathbb{N}$, we define $k[s]:=t$ if $t>k$ such that $i_t=i_k$ and the number of $i_k$ appearing in $\{i_{k+1},\ldots,i_t\}$ is exactly $s$; otherwise we set it to be zero. Hence $k[1]=\ell$ implies that $i_k=i_\ell$ and for any $k<p<\ell$, $i_p\neq i_k$.

\begin{theorem}\label{Thm:term}
Suppose that $k[1]=\ell$ and $c_s=-\langle\alpha_{i_k},\alpha_{i_s}^\vee\rangle$. The monomial $F_{\beta_{k+1}}^{(c_{k+1})}\ldots F_{\beta_{\ell-1}}^{(c_{\ell-1})}$ appears in $F_{\beta_\ell}F_{\beta_k}-q^{-(\beta_k,\beta_\ell)}F_{\beta_k}F_{\beta_\ell}$ with non-zero coefficient.
\end{theorem}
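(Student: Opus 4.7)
The plan has three steps: (i) reduce to the case $k=1$ via Lusztig's braid automorphisms; (ii) verify that the candidate monomial has the correct weight; (iii) extract its coefficient by induction on $\ell$.

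For step (i), since Lusztig's automorphisms $T_i$ are algebra automorphisms of the full $U_q(\g)$, applying $(T_{i_1}\cdots T_{i_{k-1}})^{-1}$ to both sides of \eqref{Eq:LS} yields the Levendorskii-Soibelman identity for the shifted reduced expression $s_{i_k}\cdots s_{i_N}$, with the same scalars $c(n_{k+1},\ldots,n_{\ell-1})$ on the right-hand side. Thus one may assume $k=1$, in which case $F_{\beta_1}=F_{i_1}$, $F_{\beta_\ell}=T_{i_1}T_{i_2}\cdots T_{i_{\ell-1}}(F_{i_1})$ (using $i_\ell=i_1$), and $i_p\neq i_1$ for all $1<p<\ell$.

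For step (ii), writing $v_s=s_{i_1}\cdots s_{i_{s-1}}$ so that $\beta_s=v_s(\alpha_{i_s})$, the relation $v_{s+1}=v_s s_{i_s}$ gives $v_{s+1}(\alpha_{i_1})-v_s(\alpha_{i_1})=-\langle\alpha_{i_1},\alpha_{i_s}^\vee\rangle\beta_s=-a_{i_s,i_1}\beta_s$. Telescoping from $s=1$ to $s=\ell-1$, and using $v_\ell(\alpha_{i_1})=\beta_\ell$, $v_1(\alpha_{i_1})=\alpha_{i_1}$, $a_{i_1,i_1}=2$, yields $\beta_1+\beta_\ell=\sum_{s=2}^{\ell-1}c_s\beta_s$ in $\mathcal{Q}$, so the monomial $F_{\beta_2}^{(c_2)}\cdots F_{\beta_{\ell-1}}^{(c_{\ell-1})}$ has exactly the weight of the left-hand side of \eqref{Eq:LS}.

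For step (iii), I would induct on $\ell$. The base case $\ell=3$ is a rank-two computation inside the subalgebra of $U_q(\g)$ generated by $F_{i_1}$ and $F_{i_2}$: unfolding $F_{\beta_3}=T_{i_1}(T_{i_2}(F_{i_1}))$ via Lusztig's explicit formula for $T_i(F_j)$ and combining with the quantum Serre relation yields $F_{\beta_2}^{(c_2)}$ with an explicit, nonzero coefficient in $\mathbb{Z}[q^{\pm 1}]$. For $\ell>3$, write $F_{\beta_\ell}=T_{i_1}(u)$ with $u=T_{i_2}\cdots T_{i_{\ell-1}}(F_{i_1})\in U_q(\n^-)$; applying the shifted induction hypothesis to $u$ and $F_{i_2}$, then applying $T_{i_1}$ and using Xi's formula \eqref{Eq:Xi} to reorder the intermediate products into PBW form, should produce the desired monomial.

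The main obstacle will be controlling the non-cancellation of the candidate coefficient under the iterated application of Lusztig operators. Each $T_{i_s}$ expands into up to $1-a_{i_s,i_1}$ summands, and several distinct expansion branches can a priori contribute to the same PBW monomial after reordering. The key idea is to identify a canonical ``extremal'' expansion branch---the one placing $F_{i_s}$ to its maximal admissible power $-a_{i_s,i_1}$ at each stage---and to show, via $q$-binomial manipulations together with the convexity of the ordering on $\Delta_+$ which forbids rewrites of intermediate products through $\beta_1$ or $\beta_\ell$, that competing branches contribute to PBW monomials of strictly different multi-index $(n_2,\ldots,n_{\ell-1})\neq (c_2,\ldots,c_{\ell-1})$. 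The extremal contribution then survives as a nonzero Laurent polynomial in $q$, which is the required coefficient.
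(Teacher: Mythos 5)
Your steps (i) and (ii), and the rank-two base case, match the paper's argument: the reduction to $k=1$ via conjugation by the braid automorphisms and the weight identity $\beta_1+\beta_\ell=\sum_{s=2}^{\ell-1}c_s\beta_s$ (Lemma~\ref{Lem:Root}(1)) are exactly what the paper establishes, and the telescoping derivation you give is correct.

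The inductive step for $\ell>3$, however, has a genuine gap: you peel off $T_{i_1}$ from the \emph{front}, writing $F_{\beta_\ell}=T_{i_1}(u)$ with $u=T_{i_2}\cdots T_{i_{\ell-1}}(F_{i_1})$, and propose to compute an LS commutator for $u$ and then apply $T_{i_1}$. This does not work. To convert the target commutator $[F_{\beta_\ell},F_{i_1}]_q=[T_{i_1}(u),F_{i_1}]_q$ into a commutator involving $u$, you would need $T_{i_1}^{-1}(F_{i_1})$, which equals (up to a Cartan factor) $-E_{i_1}$ and lies outside $U_q(\lie n^-)$; so the commutator cannot be transported through $T_{i_1}$. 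Moreover, the ``shifted induction hypothesis applied to $u$ and $F_{i_2}$'' is not a valid instance of the theorem: in the word $s_{i_2}\cdots s_{i_{\ell-1}}s_{i_1}$ the root vector $u$ has innermost simple root $\alpha_{i_1}$, not $\alpha_{i_2}$, and the hypothesis $k[1]=\ell$ (a repeat of the same simple reflection framing the interval) does not hold for $F_{i_2}$ and $u$.

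The paper instead peels off the \emph{last} operator $T_{i_{\ell-1}}$, writing $F_{\beta_\ell}=\sum_{r+s=c_{\ell-1}}(-1)^rq_{i_{\ell-1}}^r F_{\beta_{\ell-1}}^{(r)}X F_{\beta_{\ell-1}}^{(s)}$ where $X=T_{i_1}\cdots T_{i_{\ell-2}}(F_{i_1})$. This $X$ is the root vector for the reduced word $s_{i_1}\cdots s_{i_{\ell-2}}s_{i_1}$, in which $i_1$ repeats at positions $1$ and $\ell-1$, so the induction hypothesis on $\ell-1$ applies directly to the commutator $[X,F_{\beta_1}]_q$. The non-cancellation is then made precise by Lemma~\ref{Lem:X}: the PBW expansion of $X$ involves no $F_{\beta_{\ell-1}},\ldots,F_{\beta_N}$, which is what isolates the desired coefficient when you slide the $F_{\beta_{\ell-1}}^{(r)}$ factors past $X$ and $F_{\beta_1}$. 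Your proposed ``extremal branch'' heuristic gestures at this but provides no mechanism to rule out competing contributions; you would need to prove something like Lemma~\ref{Lem:X} to make the inductive step close.
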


The rest of this section will be devoted to prove this theorem.
\par
First observe that by applying $T_{i_{k-1}}^{-1}\ldots T_{i_1}^{-1}$, we can suppose that $k=1$. By assumption, $i_\ell=i_1$. 

\subsection{Preparations to the proof}
We keep the notations in the statement of the theorem and start deriving some combinatorial formulas on roots.

\begin{lemma}\label{Lem:Root}
The following statements hold:
\begin{enumerate}[(1).]
\item We have $\beta_1 + \beta_{\ell} = \sum_{j=2}^{\ell-1} c_j \beta_{j}.$
\item The expression $s_{i_1}\ldots s_{i_{\ell-2}}s_{i_1}$ is reduced.
\item When $c_2\neq 0$, the root $s_{i_1}\ldots s_{i_{\ell-2}}(\alpha_{i_1})\in\Delta_+$.
\end{enumerate}
\end{lemma}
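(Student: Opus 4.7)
I will handle the three parts in the order (1), (3), (2), since (2) will follow formally from (3) via the standard criterion ``$vs_i$ is reduced iff $v(\alpha_i)>0$''.

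For Part (1), the plan is a telescoping identity. Since $s_{i_1}$ is invertible, applying it to the claimed equation reduces it to
\[
s_{i_2}\cdots s_{i_{\ell-1}}(\alpha_{i_1})-\alpha_{i_1}=\sum_{j=2}^{\ell-1}c_j\, s_{i_2}\cdots s_{i_{j-1}}(\alpha_{i_j}).
\]
I would then write the left-hand side as the telescoping sum
\[
\sum_{j=2}^{\ell-1}\bigl(s_{i_2}\cdots s_{i_j}(\alpha_{i_1})-s_{i_2}\cdots s_{i_{j-1}}(\alpha_{i_1})\bigr)=\sum_{j=2}^{\ell-1}s_{i_2}\cdots s_{i_{j-1}}\bigl(s_{i_j}(\alpha_{i_1})-\alpha_{i_1}\bigr),
\]
and use the elementary identity $s_{i_j}(\alpha_{i_1})-\alpha_{i_1}=-\langle\alpha_{i_1},\alpha_{i_j}^\vee\rangle\alpha_{i_j}=c_j\alpha_{i_j}$ to match each summand with the right-hand side.

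For Part (3), writing $\gamma:=s_{i_1}\cdots s_{i_{\ell-2}}(\alpha_{i_1})$, I would first relate $\gamma$ to $\beta_\ell$ by expanding $s_{i_{\ell-1}}(\alpha_{i_1})=\alpha_{i_1}+c_{\ell-1}\alpha_{i_{\ell-1}}$, which gives
\[
\beta_\ell=s_{i_1}\cdots s_{i_{\ell-2}}\bigl(s_{i_{\ell-1}}(\alpha_{i_1})\bigr)=\gamma+c_{\ell-1}\beta_{\ell-1}.
\]
Substituting into Part (1) yields $\gamma=-\alpha_{i_1}+\sum_{j=2}^{\ell-2}c_j\beta_j$. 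Each $\beta_j$ ($j\geq 2$) is positive and each $c_j$ is a non-negative integer, so the sum contributes non-negatively to every simple root coordinate; the only potentially negative coordinate is that of $\alpha_{i_1}$. The hypothesis $c_2\neq 0$ is then crucial: since $\beta_2=s_{i_1}(\alpha_{i_2})=\alpha_{i_2}+c_2\alpha_{i_1}$, the term $c_2\beta_2$ contributes $c_2^{\,2}\geq 1$ to the $\alpha_{i_1}$-coefficient of $\gamma$, which together with contributions from other $\beta_j$'s ensures that this coefficient is $\geq 0$. As $\gamma$ is a non-zero root (being the image of a root under a Weyl group element) with non-negative simple-root coordinates, it lies in $\Delta_+$.

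For Part (2), I would invoke the standard criterion that, for $v\in W$ and a simple reflection $s_i$, the expression $v s_i$ is reduced precisely when $v(\alpha_i)>0$; applying this to $v=s_{i_1}\cdots s_{i_{\ell-2}}$ (which is itself reduced as an initial segment of $\underline{w}_0$) and the result of Part (3) immediately gives the claim.

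The main obstacle I anticipate is the positivity argument in Part (3): one must confirm that the contribution $c_2^{\,2}\alpha_{i_1}$ from $c_2\beta_2$ is genuinely present and not cancelled, and that no small-$\ell$ edge case (e.g.\ $\ell\leq 3$, where the sum $\sum_{j=2}^{\ell-2}$ becomes empty) undermines the argument—so I would either restrict attention to $\ell\geq 4$ or check the remaining small cases directly against the hypothesis $c_2\neq 0$.
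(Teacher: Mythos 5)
Your treatment of Part (1) is correct and amounts to the same computation as the paper's, just organized as a telescoping sum rather than the paper's ``iterate this identity''; no issues there.

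In Part (3) there is a small but real slip in the non-simply-laced case: you write $\beta_2 = s_{i_1}(\alpha_{i_2}) = \alpha_{i_2} + c_2\alpha_{i_1}$, but in fact $s_{i_1}(\alpha_{i_2}) = \alpha_{i_2} - \langle\alpha_{i_2},\alpha_{i_1}^\vee\rangle\alpha_{i_1} = \alpha_{i_2} - a_{i_1,i_2}\alpha_{i_1}$, and the number $-a_{i_1,i_2} = -\langle\alpha_{i_2},\alpha_{i_1}^\vee\rangle$ is in general \emph{not} equal to $c_2 = -\langle\alpha_{i_1},\alpha_{i_2}^\vee\rangle$ (they only agree when the simple roots have the same length). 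The correct $\alpha_{i_1}$-coefficient of $c_2\beta_2$ is therefore $-c_2 a_{i_1,i_2}$, not $c_2^2$. Since $c_2\neq 0$ forces $a_{i_1,i_2}\neq 0$ and hence $-a_{i_1,i_2}\geq 1$, both quantities are $\geq 1$ and your conclusion survives, but the intermediate identity should be corrected. Your remark about the $\ell\leq 3$ edge case is well founded: for $\ell=3$ one has $\gamma = s_{i_1}(\alpha_{i_1}) = -\alpha_{i_1}<0$, so the argument (and indeed the statement of (3)) tacitly assumes $\ell\geq 4$, which is harmless in the application since the $\ell=3$ case of Theorem \ref{Thm:term} is handled separately as the rank-$2$ base case.

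The genuine gap is in Part (2). Your proposed derivation routes (2) through (3), and (3) carries the hypothesis $c_2\neq 0$; but (2), as stated, has no such hypothesis, so your argument does not establish it in the remaining case. In fact the statement of (2) is \emph{false} without some such additional hypothesis: take $\tt A_3$ with $\underline{w}_0 = s_1s_3s_2s_1s_3s_2$ (a reduced word for $w_0$), $k=1$, $\ell=4$, so $c_2=-\langle\alpha_1,\alpha_3^\vee\rangle=0$; then $s_{i_1}s_{i_2}s_{i_1}=s_1s_3s_1=s_3$ has length $1$, not $3$. The paper's own one-line justification of (2) (``there is no $i_1$ among $i_2,\ldots,i_{\ell-2}$ and $s_{i_1}\cdots s_{i_{\ell-2}}$ is reduced'') is equally incomplete — the conclusion would additionally require that $s_{i_2}\cdots s_{i_{\ell-2}}(\alpha_{i_1})\neq\alpha_{i_1}$, which can fail when all of $c_2,\ldots,c_{\ell-2}$ vanish. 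So your approach, far from being deficient relative to the paper's, actually exposes the missing hypothesis. Since the lemma is only invoked (in Lemma \ref{Lem:X} and in the inductive step of Theorem \ref{Thm:term}) under the standing assumption $c_2\neq 0$, the downstream results are unaffected, but you should flag that (2) needs the same hypothesis as (3).
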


\begin{proof}
\begin{enumerate}
\item By the definition of $\beta_\ell$, we have:
\begin{eqnarray*}
\beta_\ell&=&s_{i_1}\ldots s_{i_{\ell-2}}s_{i_{\ell-1}}(\alpha_{i_1})\\
&=& s_{i_1}\ldots s_{i_{\ell-2}}(\alpha_{i_1}+c_{\ell-1}\alpha_{i_{\ell-1}})\\
&=& s_{i_1}\ldots s_{i_{\ell-2}}(\alpha_{i_1})+c_{\ell-1}\beta_{\ell-1}
\end{eqnarray*}
Iterating this computation gives 
$$\beta_\ell=s_{i_1}(\alpha_{i_1})+c_2\beta_2+\ldots+c_{\ell-1}\beta_{\ell-1}.$$
As $\beta_1=\alpha_{i_1}$, the formula is proved.
\item It suffices to notice that there is no $i_1$ in the set $\{i_2,\ldots,i_{\ell-2}\}$, and the expression $s_{i_1}\ldots s_{i_{\ell-2}}$ is reduced.
\item The same argument as in (1) shows that
$$s_{i_1}\ldots s_{i_{\ell-2}}(\alpha_{i_1})=-\alpha_{i_1}+c_2\beta_2+\ldots+c_{\ell-2}\beta_{\ell-2}.$$
Since for any $2\leq s\leq \ell-1$, $c_s\geq 0$, it suffices to show that $c_2\beta_2-\alpha_{i_1}$ is a positive root. We have
$$c_2\beta_2-\alpha_{i_1}=c_2\alpha_{i_2}-(a_{i_1,i_2}+1)\alpha_{i_1}.$$
As $c_2=-\langle\alpha_{i_1},\alpha_{i_2}^\vee\rangle\neq 0$, $-a_{i_1,i_2}=-\langle\alpha_{i_2},\alpha_{i_1}^\vee\rangle\geq 1$, this proves the statement.
\end{enumerate}
\end{proof}

The following lemma will be used later.

\begin{lemma}\label{Lem:X}
Suppose that $c_2\neq 0$ and let $X:=T_{i_1}\ldots T_{i_{\ell-2}}(F_{i_1})$. 
\begin{enumerate}
\item We have $X\in U_q(\lie n^-)$.
\item In the following expression of $X$:
$$X=\sum_{n_1,\ldots,n_N\geq 0}h(n_1,\ldots,n_N)F_{\beta_1}^{(n_1)}\ldots F_{\beta_N}^{(n_N)},$$
if $h(n_1,\ldots,n_N)\neq 0$, then $n_{\ell-1}=n_{\ell}=\ldots=n_N=0$.
\end{enumerate}
\end{lemma}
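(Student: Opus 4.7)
The plan is to treat the two parts separately. For part~(1), I will use Lemma~\ref{Lem:Root}(2)--(3) together with a classical Lusztig-theoretic result. For part~(2), I will prove the stronger statement that $X$ lies in the quantum Schubert cell attached to $\underline{w}:=s_{i_1}\cdots s_{i_{\ell-2}}$, after which the PBW basis theorem for this subalgebra forces the claimed vanishing of $h$.

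For part~(1): set $\underline{w}:=s_{i_1}\cdots s_{i_{\ell-2}}$, so that $X=T_{\underline{w}}(F_{i_1})$. Under the hypothesis $c_2\neq 0$, Lemma~\ref{Lem:Root}(2) shows that $\underline{w}\,s_{i_1}$ is a reduced expression, equivalently $\underline{w}(\alpha_{i_1})\in\Delta_+$ (exactly Lemma~\ref{Lem:Root}(3)). By a classical fact in Lusztig's theory (see \cite{Lus}, Chapter~40), whenever $w\in W$ satisfies $w(\alpha_j)\in\Delta_+$, one has $T_w(F_j)\in U_q(\lie n^-)$. Applying this to $w=\underline{w}$ and $j=i_1$ immediately gives $X\in U_q(\lie n^-)$.

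For part~(2): the plan is to show $X\in\mathcal{A}:=\langle F_{\beta_1},\ldots,F_{\beta_{\ell-2}}\rangle$. This subalgebra carries its own PBW basis $\{F_{\beta_1}^{(n_1)}\cdots F_{\beta_{\ell-2}}^{(n_{\ell-2})}\}$ which embeds compatibly into the global PBW basis of $U_q(\lie n^-)$ associated with $\underline{w}_0$. Thus, once $X\in\mathcal{A}$ is established, the uniqueness of the PBW expansion in $U_q(\lie n^-)$ forces $h(n_1,\ldots,n_N)=0$ for every $(n_1,\ldots,n_N)$ with some $n_t\neq 0$ and $t\geq\ell-1$.

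To establish $X\in\mathcal{A}$, I would argue by induction on the length $\ell-2$ of $\underline{w}$, combined with the braid relations. Writing $X=T_{i_1}(Y)$ with $Y:=T_{i_2}\cdots T_{i_{\ell-2}}(F_{i_1})$, the induction hypothesis (after relabeling indices) controls $Y$ inside a smaller Schubert cell, and the application of $T_{i_1}$ converts the corresponding generators into the $F_{\beta_t}\in\mathcal{A}$. The main obstacle is that braid automorphisms do not preserve $U_q(\lie n^-)$ in general, so to rule out contributions along $F_{\beta_t}$ for $t\geq\ell-1$ one needs an additional weight-theoretic input. Here I would use the weight formula $\underline{w}(\alpha_{i_1})=-\alpha_{i_1}+\sum_{s=2}^{\ell-2}c_s\beta_s$ extracted from the proof of Lemma~\ref{Lem:Root}(3), which pins down the weight of $X$ inside the $\mathbb{Z}$-span of $\{\beta_1,\ldots,\beta_{\ell-2}\}$, and then combine this with the uniqueness of the PBW decomposition to exclude monomials involving $F_{\beta_t}$ for $t\geq\ell-1$.
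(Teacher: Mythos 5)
Part~(1) of your proposal is correct and matches the paper's (terse) argument: Lemma~\ref{Lem:Root}(3) gives $\underline{w}(\alpha_{i_1})\in\Delta_+$, and the standard fact that $T_w(F_j)\in U_q(\mathfrak{n}^-)$ whenever $w(\alpha_j)>0$ gives the conclusion.

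Part~(2) has a genuine gap, which you yourself flag as the ``main obstacle'' but then patch with an argument that does not work. First, the induction on $\ell(\underline{w})$ does not close: setting $Y:=T_{i_2}\cdots T_{i_{\ell-2}}(F_{i_1})$, this $Y$ is \emph{not} controlled by the smaller Schubert cell attached to $s_{i_2}\cdots s_{i_{\ell-2}}$. Since $s_{i_2}\cdots s_{i_{\ell-2}}s_{i_1}$ is reduced, $Y$ is itself the $(\ell-2)$-th quantum PBW root vector for that reduced expression, i.e.\ it sits exactly one step \emph{outside} the subalgebra $\langle F'_{\gamma_1},\ldots,F'_{\gamma_{\ell-3}}\rangle$ that the induction would need it to lie in. It is also not of the form required by the statement being proved (there is no repeated index $i_j=i_1$ in the truncated word), so there is no relabeling under which the inductive hypothesis applies. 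Second, the ``weight-theoretic input'' cannot rescue this: knowing only that $\mathrm{wt}(X)=-\underline{w}(\alpha_{i_1})$ lies in the $\mathbb{Z}$-span of $\beta_1,\ldots,\beta_{\ell-2}$ does not exclude PBW monomials involving $F_{\beta_t}$ with $t\geq\ell-1$, because a given weight typically admits many expressions as a non-negative combination of the $\beta_t$'s. Indeed, when $c_{\ell-1}=0$, Lemma~\ref{Lem:Root}(1) gives $\underline{w}(\alpha_{i_1})=\beta_\ell$, so the single root vector $F_{\beta_\ell}$ has the same weight as $X$; weight considerations alone cannot rule out its appearance, yet the lemma asserts that it does not appear.

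The paper avoids these issues by a purely computational induction: since $T_{i_p}$ fixes $F_{i_1}$ whenever $c_p=0$, one may drop the tail of braid operators and reduce to $X=T_{i_1}\cdots T_{i_t}(F_{i_1})$ with $t\leq\ell-2$ maximal such that $c_t\neq 0$. Expanding $T_{i_t}(F_{i_1})$ by Lusztig's formula expresses $X$ as a sum of $F_{\beta_t}^{(r)}\bigl(T_{i_1}\cdots T_{i_{t-1}}(F_{i_1})\bigr)F_{\beta_t}^{(s)}$. Iterating downward and using Theorem~\ref{Thm:Xi} to reorder keeps all PBW indices bounded by $t\leq\ell-2$. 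If you want a Schubert-cell formulation, you would need to establish $X\in U_q^-[\underline{w}]$ directly by this kind of peeling argument or by a known structural result on the intersection $U_q(\mathfrak{n}^-)\cap T_{\underline{w}}\bigl(U_q(\mathfrak{n}^-)\bigr)$, neither of which is supplied by the proposal.
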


\begin{proof}
\begin{enumerate}
\item The statement holds for weight reason and Lemma \ref{Lem:Root} (3).
\item Since $c_2\neq 0$, there exists $2\leq t\leq\ell-2$ such that $c_t\neq 0$ and for any $t<p\leq \ell-2$, $c_p=0$. Then
\begin{eqnarray*}
X &=& T_{i_1}\ldots T_{i_t}(F_{i_1})\\
&=& \sum_{r+s=c_t}(-1)^r q_{i_t}^r T_{i_1}\ldots T_{i_{t-1}}(F_{i_t}^{(r)})T_{i_1}\ldots T_{i_{t-1}}(F_{i_1})T_{i_1}\ldots T_{i_{t-1}}(F_{i_t}^{(s)})\\ 
&=& \sum_{r+s=c_t}(-1)^r q_{i_t}^r F_{\beta_t}^{(r)}T_{i_1}\ldots T_{i_{t-1}}(F_{i_1}) F_{\beta_t}^{(s)}.
\end{eqnarray*}
Since $t-1<\ell-2$, iterating this procedure and applying Theorem \ref{Thm:Xi}  to write the quantum PBW root vectors into correct order, we obtained that $n_{t+1}=n_{t+2}=\ldots=n_N=0$. As $t\leq\ell-2$, $n_{\ell-1}=n_\ell=\ldots=n_N=0$.
\end{enumerate}
\end{proof}

\subsection{Proof of Theorem \ref{Thm:term}: rank 2 cases}
When the rank of $\g$ is $2$, to prove the theorem, it suffices to consider the cases $(i_1,i_2,i_3)=(1,2,1)$ and $(i_1,i_2,i_3)=(2,1,2)$.
\par
We first prove the theorem for $(i_1,i_2,i_3)=(2,1,2)$ in $\tt A_2$, $\tt B_2$ and $\tt C_2$, the other case can be proved similarly.

\begin{enumerate}
\item Case $\tt A_2$. In this case $\beta_1=\alpha_2$, $\beta_3=\alpha_1$, hence $F_{\beta_3}=F_1$. The commutation relation reads $F_1F_2-qF_2F_1$, which is nothing but $T_2(F_1)$.
\item Case $\tt B_2$. In this case $\beta_1=\alpha_2$, $\beta_3=2\alpha_1+\alpha_2$. The commutation relation is
$$F_{\beta_3}F_{\beta_1}-F_{\beta_1}F_{\beta_3}=(-1+q^{-2})F_{\beta_2}^{(2)}.$$
\item Case $\tt C_2$. In this case $\beta_1=\alpha_2$, $\beta_3=\alpha_1+\alpha_2$. The commutation relation is given by
$$F_{\beta_3}F_{\beta_1}-F_{\beta_1}F_{\beta_3}=(q+q^{-1})F_{\beta_2}.$$
\end{enumerate}

It remains to consider the $\tt G_2$ case. For $(i_1,i_2,i_3)=(1,2,1)$, $\beta_1=\alpha_1$ and $\beta_3=2\alpha_1+3\alpha_2$. 
The commutation relation reads
$$F_{\beta_3}F_{\beta_1}-q^{-3}F_{\beta_1}F_{\beta_3}=(1-q^{-2}-q^{-4}+q^{-6})F_{\beta_2}^{(3)}.$$
For $(i_1,i_2,i_3)=(2,1,2)$, $\beta_1=\alpha_2$ and $\beta_3=\alpha_1+2\alpha_2$. 
The commutation relation is
$$F_{\beta_3}F_{\beta_1}-q^{-1}F_{\beta_1}F_{\beta_3}=(q^2+1+q^{-2})F_{\beta_2}.$$

\subsection{Proof of Theorem \ref{Thm:term}: general case}
The proof is executed by induction on $\ell$. The case $\ell=3$ is the rank $2$ case proved above.
\par
Moreover, we can assume that $c_2\neq 0$ and $c_{\ell-1}\neq 0$, since otherwise the statement holds by induction hypothesis.
\par
We start with considering the quantum PBW root vector $F_{\beta_\ell}$:
\begin{eqnarray*}
F_{\beta_\ell} &=& T_{i_1}\ldots T_{i_{\ell-1}}(F_{i_1})\\
&=& \sum_{r+s=c_{\ell-1}}(-1)^r q_{i_{\ell-1}}^r T_{i_1}\ldots T_{i_{\ell-2}}(F_{i_{\ell-1}}^{(r)}F_{i_1}F_{i_{\ell-1}}^{(s)})\\
&=& \sum_{r+s=c_{\ell-1}}(-1)^r q_{i_{\ell-1}}^r F_{\beta_{\ell-1}}^{(r)}T_{i_1}\ldots T_{i_{\ell-2}}(F_{i_1})F_{\beta_{\ell-1}}^{(s)}.
\end{eqnarray*}
We suppose that $X=T_{i_1}\ldots T_{i_{\ell-2}}(F_{i_1})$ and $\gamma=s_{i_1}\ldots s_{i_{\ell-2}}(\alpha_{i_1})$. The commutator of quantum PBW root vectors gives
$$F_{\beta_\ell}F_{\beta_1}-q^{-(\beta_1,\beta_\ell)}F_{\beta_1}F_{\beta_\ell}=\sum_{r+s=c_{\ell-1}}(-1)^r q_{i_{\ell-1}}^r \left(F_{\beta_{\ell-1}}^{(r)}XF_{\beta_{\ell-1}}^{(s)}F_{\beta_1}-q^{-(\beta_1,\beta_\ell)}F_{\beta_1}F_{\beta_{\ell-1}}^{(r)}XF_{\beta_{\ell-1}}^{(s)}\right).$$
By Theorem \ref{Thm:Xi}, we have 
$$F_{\beta_{\ell-1}}^{(r)}XF_{\beta_{\ell-1}}^{(s)}F_{\beta_1}=q^{-s(\beta_1,\beta_{\ell-1})}F_{\beta_{\ell-1}}^{(r)}XF_{\beta_1}F_{\beta_{\ell-1}}^{(s)}+\Sigma_1.$$
We claim that $F_{\beta_2}^{(c_2)}\ldots F_{\beta_{\ell-1}}^{(c_{\ell-1})}$ does not appear with non-zero coefficient in $\Sigma_1$. Indeed, by Lemma \ref{Lem:X}, writing $X$ in the PBW basis, $F_{\beta_{\ell-1}}$ does not appear in any monomial. Hence commuting $F_{\beta_{\ell-1}}^{(r)}$ with $X$ using Theorem \ref{Thm:Xi} will not producing any element in the PBW basis with the power of $F_{\beta_{\ell-1}}$ higher than $r$. Again by Theorem \ref{Thm:Xi}, commuting $F_{\beta_{\ell-1}}^{(s)}$ and $F_{\beta_1}$ will produce elements in the PBW basis with the power of $F_{\beta_{\ell-1}}$ strictly less than $s$. Combining them together proves the claim.
\par
The same argument shows that, if we write using Theorem \ref{Thm:Xi}
$$F_{\beta_1}F_{\beta_{\ell-1}}^{(r)}XF_{\beta_{\ell-1}}^{(s)}=q^{-r(\beta_1,\beta_{\ell-1})}F_{\beta_{\ell-1}}^{(r)}F_{\beta_1}XF_{\beta_{\ell-1}}^{(s)}+\Sigma_2,$$
then the monomial $F_{\beta_2}^{(c_2)}\ldots F_{\beta_{\ell-1}}^{(c_{\ell-1})}$ does not appear in $\Sigma_2$ with non-zero coefficient.
\par
As a consequence, to study the coefficient of $F_{\beta_2}^{(c_2)}\ldots F_{\beta_{\ell-1}}^{(c_{\ell-1})}$, it suffices to consider the term
\begin{eqnarray*}
& &\sum_{r+s=c_{\ell-1}}(-1)^r q_{i_{\ell-1}}^r \left(q^{-s(\beta_1,\beta_{\ell-1})}F_{\beta_{\ell-1}}^{(r)}XF_{\beta_1}F_{\beta_{\ell-1}}^{(s)}-q^{r(\beta_1,\beta_{\ell-1})-(\beta_1,\beta_\ell)}F_{\beta_{\ell-1}}^{(r)}F_{\beta_1}XF_{\beta_{\ell-1}}^{(s)}\right)\\
&=&\sum_{r+s=c_{\ell-1}}(-1)^r q_{i_{\ell-1}}^r q^{-s(\beta_1,\beta_{\ell-1})}F_{\beta_{\ell-1}}^{(r)}\left(XF_{\beta_1}-q^{c_{\ell-1}(\beta_1,\beta_{\ell-1})-(\beta_1,\beta_\ell)}F_{\beta_1}X\right)F_{\beta_{\ell-1}}^{(s)}.
\end{eqnarray*}
We simplify the power of $q$ in the bracket: by the proof of Lemma \ref{Lem:Root} (1), $$c_{\ell-1}(\beta_1,\beta_{\ell-1})-(\beta_1,\beta_\ell)=-(\beta_1,\beta_\ell-c_{\ell-1}\beta_{\ell-1})=-(\beta_1,\gamma).$$
The term in the bracket now reads
$$T_{i_1}\ldots T_{i_{\ell-2}}(F_{i_1})F_{\beta_1}-q^{-(\beta_1,\gamma)}F_{\beta_1}T_{i_1}\ldots T_{i_{\ell-2}}(F_{i_1}).$$
By induction hypothesis, in this commutator, when written in the PBW basis, there exists a term $\lambda F_{\beta_2}^{(c_2)}\ldots F_{\beta_{\ell-2}}^{(c_{\ell-2})}$ with $\lambda\neq 0$. From this we obtain the following term in the above sum
$$\sum_{r+s=c_{\ell-1}}(-1)^r q_{i_{\ell-1}}^r q^{-s(\beta_1,\beta_{\ell-1})}\lambda F_{\beta_{\ell-1}}^{(r)} F_{\beta_2}^{(c_2)}\ldots F_{\beta_{\ell-2}}^{(c_{\ell-2})}F_{\beta_{\ell-1}}^{(s)}.$$
By applying Theorem \ref{Thm:Xi} again, we obtain
\begin{equation}\label{Eq:E1}
F_{\beta_{\ell-1}}^{(r)} F_{\beta_2}^{(c_2)}\ldots F_{\beta_{\ell-2}}^{(c_{\ell-2})}-q^{-\mu} F_{\beta_2}^{(c_2)}\ldots F_{\beta_{\ell-2}}^{(c_{\ell-2})}F_{\beta_{\ell-1}}^{(r)}=\Sigma_3,
\end{equation}
where $\mu=\sum_{t=2}^{\ell-2}c_t r(\beta_t,\beta_{\ell-1})$, and the monomial $F_{\beta_2}^{(c_2)}\ldots F_{\beta_{\ell-2}}^{(c_{\ell-2})}F_{\beta_{\ell-1}}^{(r)}$ does not appear in $\Sigma_3$ with non-zero coefficient. Therefore the monomial $F_{\beta_2}^{(c_2)}\ldots F_{\beta_{\ell-2}}^{(c_{\ell-2})}F_{\beta_{\ell-1}}^{(c_{\ell-1})}$ appears in \eqref{Eq:E1} with coefficient
\begin{equation}\label{Eq:E2}
\lambda \sum_{r+s=c_{\ell-1}}(-1)^r q_{i_{\ell-1}}^r q^{-s(\beta_1,\beta_{\ell-1})-\sum_{t=2}^{\ell-2}c_t r(\beta_t,\beta_{\ell-1})}\bino{c_{\ell-1}}{q}_{q_{i_{\ell-1}}}.
\end{equation}
It suffices to show that this term is non-zero. We first simplify the power of $q$: apply Lemma \ref{Lem:Root} (1) and replace $s=c_{\ell-1}-r$,
\begin{eqnarray*}
s(\beta_1,\beta_{\ell-1})+\sum_{t=2}^{\ell-2}c_t r(\beta_t,\beta_{\ell-1})&=&c_{\ell-1}(\beta_1,\beta_{\ell-1})+r(-\beta_1+\sum_{t=2}^{\ell-2}c_t\beta_t,\beta_{\ell-1})\\
&=& c_{\ell-1}(\beta_1,\beta_{\ell-1})+r(\beta_\ell-c_{\ell-1}\beta_{\ell-1},\beta_{\ell-1})\\
&=&c_{\ell-1}(\beta_1,\beta_{\ell-1})+r(s_{i_{\ell-1}}(\alpha_{i_1})-c_{\ell-1}\alpha_{i_{\ell-1}},\alpha_{i_{\ell-1}})\\
&=& c_{\ell-1}(\beta_1,\beta_{\ell-1})+r(\alpha_{i_1},\alpha_{i_{\ell-1}}),
\end{eqnarray*}
where for the third equality, the $W$-invariance is used. Since both $\lambda$ and $q^{-(\beta_1,\beta_{\ell-1})}$ are non-zero constants, to show that the coefficient \eqref{Eq:E2} is non-zero, it suffices to show that
\begin{equation}\label{Eq:E3}
\sum_{r+s=c_{\ell-1}}(-1)^r q_{i_{\ell-1}}^r q^{-r(\alpha_{i_1},\alpha_{i_{\ell-1}})}\bino{c_{\ell-1}}{q}_{q_{i_{\ell-1}}}=\sum_{r+s=c_{\ell-1}}(-1)^r q_{i_{\ell-1}}^{r(1+c_{\ell-1})}\bino{c_{\ell-1}}{q}_{q_{i_{\ell-1}}}\neq 0.
\end{equation}
Here we used the fact that $c_{\ell-1}=-d_{i_{\ell-1}}^{-1}(\alpha_{i_{\ell-1}},\alpha_{i_1})$.
\par
By the assumption, $c_{\ell-1}\neq 0$, hence in the case where the rank of $\g$ is higher than $2$, $c_{\ell-1}=1$ or $2$. If $c_{\ell-1}=1$, the equation \eqref{Eq:E3} reads $1-q_{i_{\ell-1}}^2$. If $c_{\ell-1}=2$, the equation \eqref{Eq:E3} reads $1-q_{i_{\ell-1}}^2-q_{i_{\ell-1}}^4+q_{i_{\ell-1}}^6$. In both cases, they are non-zero. This terminates the proof of Theorem \ref{Thm:term}.

\begin{remark}
It is clear from the proof that Theorem \ref{Thm:term} still holds when $q$ is not a small root of unity. 
\end{remark}

\section{Quantum degree cone and Lusztig tight monomial cone}\label{s3}

\subsection{Lusztig tight monomial cones}

In \cite{Lus2} Lusztig defined a polyhedral cone in the simply-laced cases in order to study monomials in the canonical basis. The natural generalisations to all Lie types are given in \cite{CMMG}.

Let $\underline{w}_0=s_{i_1}\cdots s_{i_N}$ be a reduced decomposition of $w_0$. The Lusztig tight monomial cone $\mathcal{L}_{\underline{w}_0}(\g)\subset\mathbb{R}^N$ is the polyhedral cone defined by the following inequalities:
\begin{enumerate}
\item[(i)] for any two indices $1\leq p<p'\leq N$ with $p'=p[1]$, we have
$$x_p+x_{p'}+\sum_{p<s<p'}a_{i_p,i_s}x_s\leq 0;$$
\item[(ii)] for $1\leq p\leq N$, $x_p\geq 0$.
\end{enumerate}

It is shown in \cite{CMMG} that the cone $\mathcal{L}_{\underline{w}_0}(\g)$ is simplicial..

We will denote $\mathcal{L}_{\underline{w}_0}^-(\g)\subset\mathbb{R}^N$, termed a \emph{negative tight monomial cone}, the polyhedral cone defined by 
\begin{enumerate}
\item[(i)'] for any two indices $1\leq p<p'\leq N$ with $p'=p[1]$, we have
$$x_p+x_{p'}+\sum_{p<s<p'}a_{i_p,i_s}x_s\geq 0.$$
\end{enumerate}

The cone $\mathcal{L}_{\underline{w}_0}^-(\g)$ is a product of a linearity space of dimension $n$ and a simplicial cone of dimension $N-n$ by setting the coordinates associated to simple roots to zero.

\subsection{Quantum degree cones}

The quantum degree cones  are defined in \cite{BFF}, motivated by studying the quantum PBW filtration on quantum groups.
\par
We keep the notations in LS-formula. The quantum degree cone $\mathcal{D}_{\underline{w}_0}^q(\g)$ associated to a reduced decomposition $\underline{w}_0$ is defined by:
$$\mathcal{D}_{\underline{w}_0}^q(\g):=\{\mathbf{d}=(d_\beta)_{\beta\in\Delta_+}\in\mathbb{R}^{\Delta_+}\mid \text{ for }i<j,\ d_{\beta_i}+d_{\beta_j}\geq \sum_{k=i+1}^{j-1}n_k d_{\beta_k}\text{ if }c(n_{i+1},\cdots,n_{j-1})\neq 0\}.$$

We will denote $\overline{\mathcal{D}}_{\underline{w}_0}^q(\g):=\mathcal{D}_{\underline{w}_0}^q(\g)\cap\mathbb{R}_{\geq 0}^{\Delta_+}$ its non-negative part. Our notation here is slightly different to \cite{BFF}: the quantum degree cone therein is the interior of the non-negative cone $\overline{\mathcal{D}}_{\underline{w}_0}^q(\g)$.

It is proved in \textit{loc.cit.} that $\mathcal{D}_{\underline{w}_0}^q(\g)$ is non-empty, and any $\mathbf{d}\in\mathcal{D}_{\underline{w}_0}^q(\g)$ defines a filtered algebra structure on $U_q(\mathfrak{n}^-)$ by letting $\deg(F_{\beta_i})=d_{\beta_i}$. The associated graded algebra is isomorphic to a skew-polynomial algebra if and only if $\mathbf{d}$ is contained in the interior of $\mathcal{D}_{\underline{w}_0}^q(\g)$.

\subsection{Relations between two cones}

For a fixed $\underline{w}_0$, we identify $\mathbb{R}^{\Delta_+}$ with $\mathbb{R}^N$ by sending $(d_\beta)_{\beta\in\Delta_+}$ to $(d_{\beta_1},d_{\beta_2},\cdots,d_{\beta_N})$.

As a consequence of Theorem \ref{Thm:term}, for a fixed $\underline{w}_0$, we obtain an embedding of the quantum degree cone to the negative tight monomial cone. We denote $\g^L$ the Langlands dual of the Lie algebra $\g$.

\begin{corollary}\label{Cor:subset}
For any $\underline{w}_0\in R(w_0)$, we have $\mathcal{D}_{\underline{w}_0}^q(\g)\subseteq\mathcal{L}_{\underline{w}_0}^-(\g^L)$.
\end{corollary}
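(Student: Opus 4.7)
The plan is to extract, for each defining inequality of $\mathcal{L}_{\underline{w}_0}^-(\g^L)$, a matching inequality among those defining $\mathcal{D}_{\underline{w}_0}^q(\g)$, using Theorem \ref{Thm:term} as the only nontrivial input. Fix $\mathbf{d}=(d_{\beta_1},\ldots,d_{\beta_N})\in\mathcal{D}_{\underline{w}_0}^q(\g)$. The cone $\mathcal{L}_{\underline{w}_0}^-(\g^L)$ is cut out by one inequality per pair $(p,p')$ with $p<p'$ and $p'=p[1]$, so it suffices to verify each such inequality for $\mathbf{d}$.

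For a fixed such pair $(p,p')$, I would apply Theorem \ref{Thm:term} with $k=p$ and $\ell=p'$. It tells us that in the LS expansion \eqref{Eq:LS} the monomial $F_{\beta_{p+1}}^{(c_{p+1})}\cdots F_{\beta_{p'-1}}^{(c_{p'-1})}$ has non-zero coefficient $c(c_{p+1},\ldots,c_{p'-1})\neq 0$, where $c_s=-\langle\alpha_{i_p},\alpha_{i_s}^\vee\rangle$. By the very definition of $\mathcal{D}_{\underline{w}_0}^q(\g)$, this forces
\[
d_{\beta_p}+d_{\beta_{p'}}\geq\sum_{s=p+1}^{p'-1}c_s\,d_{\beta_s}.
\]

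The final step is a bookkeeping identification of the coefficients $c_s$ with Cartan entries of the Langlands dual. Writing $a_{ij}=\langle\alpha_j,\alpha_i^\vee\rangle$ for the Cartan matrix of $\g$, one has $c_s=-a_{i_s,i_p}$. Since the Cartan matrix of $\g^L$ is the transpose of that of $\g$, this reads $c_s=-a^L_{i_p,i_s}$ with $a^L$ the Cartan matrix of $\g^L$. Substituting and moving the sum to the left-hand side yields
\[
d_{\beta_p}+d_{\beta_{p'}}+\sum_{s=p+1}^{p'-1}a^L_{i_p,i_s}\,d_{\beta_s}\geq 0,
\]
which is precisely the defining inequality (i)' of $\mathcal{L}_{\underline{w}_0}^-(\g^L)$ associated to the pair $(p,p')$. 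Running this over all such pairs gives $\mathbf{d}\in\mathcal{L}_{\underline{w}_0}^-(\g^L)$, so $\mathcal{D}_{\underline{w}_0}^q(\g)\subseteq\mathcal{L}_{\underline{w}_0}^-(\g^L)$.

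There is no genuine obstacle remaining at this stage: the substance of the argument is Theorem \ref{Thm:term}, and the only thing that requires care here is the index juggling between $\langle\alpha_{i_p},\alpha_{i_s}^\vee\rangle$ and the Cartan matrix of the Langlands dual. It is also worth noting that $\mathcal{L}_{\underline{w}_0}^-(\g^L)$ as defined imposes only conditions of type (i)' and no positivity, which is consistent with $\mathcal{D}_{\underline{w}_0}^q(\g)$ also not carrying built-in positivity; hence one does not need to worry about the half-space $\mathbb{R}_{\geq 0}^{\Delta_+}$ in the comparison.
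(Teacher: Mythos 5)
Your proof is correct and is exactly the intended argument: the paper states the corollary as an immediate consequence of Theorem \ref{Thm:term} without writing the details, and those details are precisely your index-matching between $c_s=-\langle\alpha_{i_p},\alpha_{i_s}^\vee\rangle=-a_{i_s,i_p}=-a^L_{i_p,i_s}$ and the defining inequality (i)' of $\mathcal{L}^-_{\underline{w}_0}(\g^L)$. Your closing remark about the absence of positivity constraints in both cones is also the right sanity check.
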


We conjecture that these two cones coincide.

\begin{conjecture}\label{conjmain}
For any $\underline{w}_0\in R(w_0)$, we have $\mathcal{D}_{\underline{w}_0}^q(\g)=\mathcal{L}_{\underline{w}_0}^-(\g^L)$.
\end{conjecture}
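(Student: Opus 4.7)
One inclusion, $\mathcal{D}_{\underline{w}_0}^q(\g)\subseteq\mathcal{L}_{\underline{w}_0}^-(\g^L)$, is Corollary \ref{Cor:subset}. The remaining task is to show that every inequality
\[
d_{\beta_k}+d_{\beta_\ell}\geq\sum_{s=k+1}^{\ell-1} n_s\,d_{\beta_s}
\]
coming from a non-vanishing coefficient $c(n_{k+1},\ldots,n_{\ell-1})$ in the LS formula is already implied by the ``adjacent'' inequalities, namely those corresponding to pairs $(k,\ell)$ with $\ell=k[1]$, which are precisely the defining inequalities of $\mathcal{L}_{\underline{w}_0}^-(\g^L)$.

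My plan is to give a representation-theoretic meaning to the LS coefficients in the case where $\underline{w}_0$ is adapted to an orientation $Q$ of the Dynkin diagram of $\g$. Under Ringel's isomorphism $U_q(\mathfrak{n}^-)\cong H(Q)$, each PBW root vector $F_{\beta_s}$ corresponds to the unique indecomposable $M_{\beta_s}\in\rep Q$ of dimension vector $\beta_s$ (Gabriel), and the scalar $c(n_{k+1},\ldots,n_{\ell-1})$ becomes, up to an invertible power of $q$, a Hall number counting filtrations of $M_{\beta_k}\oplus M_{\beta_\ell}$ whose factors are the $M_{\beta_s}$ with multiplicities $n_s$. The non-vanishing of such a Hall number forces $\bigoplus_s M_{\beta_s}^{n_s}$ to be a degeneration of $M_{\beta_k}\oplus M_{\beta_\ell}$ in the module variety.

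I would then invoke Bongartz's theorem \cite{B}: in Dynkin type, every degeneration is generated by ``elementary'' ones arising from honest short exact sequences. Each elementary degeneration yields a concrete linear inequality on the degrees $d_{\beta_s}$, and the goal is to reduce all of them to the adjacent case $\ell=k[1]$. The plan is to perform this reduction by an induction along the Auslander--Reiten quiver of $Q$, expressing every short exact sequence between indecomposables as an iterated extension built from almost-split sequences; each such step contributes a non-negative rational combination of adjacent inequalities. This chain of reductions would constitute the proof of Theorem \ref{Thm:LSQuiver}.

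The principal obstacle is that the Hall algebra argument is only available when $\underline{w}_0$ is adapted to a Dynkin quiver. For an arbitrary reduced decomposition one cannot directly appeal to Gabriel's theorem; one would need a transport of the cone inequalities across braid moves, either via reflection functors or through an extended Hall-algebraic formalism covering the non-adapted case. Controlling the combinatorics of this transport --- in particular, ensuring that the adjacency condition $\ell=k[1]$ is preserved in a form strong enough to imply every remaining LS inequality --- is the genuinely hard step, and is why Conjecture \ref{conjmain} is stated as a conjecture while only its quiver-adapted special case is established in Section \ref{s4}.
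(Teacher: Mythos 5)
Your assessment of the logical status is exactly right: the paper proves the inclusion $\mathcal{D}_{\underline{w}_0}^q(\g)\subseteq\mathcal{L}_{\underline{w}_0}^-(\g^L)$ unconditionally (Corollary~\ref{Cor:subset}, a consequence of Theorem~\ref{Thm:term}), establishes equality only when $\g$ is simply-laced and $\underline{w}_0$ is adapted to a Dynkin quiver (Theorem~\ref{Thm:LSQuiver}), and leaves the general case as Conjecture~\ref{conjmain}. You also correctly identify the fundamental obstruction --- no Hall-algebra interpretation for non-adapted words, and no clean transport of the cone inequalities across braid moves.

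Your sketch of the quiver-adapted case agrees with the paper in broad outline (Ringel's isomorphism, interpretation of the LS coefficients as Hall numbers, Bongartz's results on degenerations), but the concluding step departs from the paper's actual argument. The paper does not perform an explicit induction along the Auslander--Reiten quiver reducing all extension inequalities to the almost-split ones; instead it works $K$-theoretically. It introduces the lattice $\Lambda=\ker(\mathbf{dim}\colon K_0^\oplus\to K_0)$, the cone $\mathcal{E}\subset\Lambda_\mathbb{R}$ generated by $[V]+[W]-[X]$ over all short exact sequences, and the dual cone $\mathcal{D}=\mathcal{E}^\vee$; it then invokes \cite[Theorem~4(1)]{FFR16} to see that $\mathcal{D}$ is \emph{freely} generated by the Hom-functors $[U,-]$ for non-projective indecomposables $U$, and proves $\mathcal{E}=\mathcal{D}^\vee$ using \cite[Corollary~4.2]{B} (degeneration order $=$ $\leq_{\mathrm{ext}}$ for representation-directed algebras). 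Combining this duality with Lemma~\ref{Lem:degenerate} yields the equality of cones. Your proposed step ``expressing every short exact sequence between indecomposables as an iterated extension built from almost-split sequences'' is not a statement one gets directly from Bongartz: $\leq_{\mathrm{ext}}$ is generated by degenerations along arbitrary short exact sequences, not almost-split ones, and there is no general factorization of a given extension through AR-sequences. This is precisely the gap that the paper's dual-cone argument and the simpliciality of $\mathcal{D}$ from \cite{FFR16} are designed to bypass; if you want to complete your sketch, you would need to substitute this biduality mechanism for the unproven AR-induction claim.
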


In the next section we prove this conjecture in the case where $\g$ is simply-laced and $\underline{w}_0$ is compatible with a quiver.

\section{Quiver representations and Hall algebras}\label{s4}

A basic reference on quiver representations and Auslander-Reiten theory is \cite{ASS}. For Hall algebras, we refer to \cite{Rin90}, \cite{FFR16}.

\subsection{Quiver representations}
Let $\mathbb{K}$ be a field. We fix a Dynkin quiver of type $\tt A$, $\tt D$, or $\tt E$ with vertices $I$ and the number of arrows between two different vertices $i$ and $j$ (in either direction) equals $-a_{ij}$.
\par
Let $\mathbb{K}Q$ be the path algebra of $Q$, $\mathrm{mod}(\mathbb{K}Q)$ be the category of finite dimensional $\mathbb{K}Q$-modules. 
\par
We denote $K_0:=K_0(\mathbb{K}Q)$ the Grothendieck group of $\mathbb{K}Q$, which can be identified with $\mathbb{Z}^I$ by mapping the class of a module to its dimension vector. 
\par
Let $K_0^\oplus:=K_0(\mathrm{mod}(\mathbb{K}Q))$ be the split Grothendieck group of $\mathbb{K}Q$-modules modulo split exact sequences. That is to say, for $M,N\in\mathrm{mod}(\mathbb{K}Q)$, $[M\oplus N]=[M]+[N]$. The abelian group $K_0^\oplus$ is freely generated by the isomorphism classes of indecomposable modules.
\par
The simple modules $S_i$ in $\mathrm{mod}(\mathbb{K}Q)$ are parametrized by the vertices $i\in I$. This identifies $K_0=\mathbb{Z}^I$ with the root lattice of $\g$, the Lie algebra having the underlying graph of $Q$ as Dynkin diagram, by sending the dimension vector of $S_i$ to the simple root $\alpha_i$. The indecomposable modules are parametrized by the positive roots in the root lattice: for each $\alpha\in\Delta_+$, there exists a unique (up to isomorphism) indecomposable module $U_\alpha$ having dimension vector $\alpha$.
\par
Notice that taking the dimension vector induces a map $\mathbf{dim}:K_0^\oplus\ra K_0$.
\par
For two $\mathbb{K}Q$-modules $M$ and $N$, we will denote
$$[M,N]:=\dim\Hom(M,N)\text{ and }[M,N]^1=\dim\Ext^1(M,N).$$

We consider the category ${\rep}_{\mk}Q$ of finite dimensional $\mk$-representations of $Q$, which is an abelian $\mk$-linear category of global dimension at most one. We denote by $\mathbf{dim}M$ the dimension vector of a representation $M$, viewed as an element of the root lattice via 
$$\mathbf{dim}M = \sum_{i\in I} \dim(M_i)\alpha_i.$$ 
The Euler form of $Q$ is defined by:
$$\langle\mathbf{dim}M,\mathbf{dim}N\rangle = [M,N]-[M,N]^1.$$ 
We denote its symmetrization by $(-,-)$, which is the bilinear form defined by the Cartan matrix.

\subsection{Auslander-Reiten quivers arising from reduced decompositions}

The category $\mathrm{mod}(\mk Q)$ is representation-directed, which means that there exists an enumeration $\beta_1,\cdots,\beta_N$ of the positive roots such that
$$\Hom(U_{\beta_k},U_{\beta_l})=0\mbox{ for }k>l\mbox{ and }{\Ext}^1(U_{\beta_k},U_{\beta_l})=0\mbox{ for }k\leq l.$$ 
We will write $U_k:=U_{\beta_k}$ for short.
It is known that there exists a sequence $i_1,\cdots,i_N$ in $I$ such that $\underline{w}_0:=s_{i_1}\cdots s_{i_N}$ is a reduced decomposition in $R(w_0)$, and such that $\beta_k = s_{i_1}\cdots s_{i_{k-1}}(\alpha_{i_k})$ for all $k = 1,\cdots,N$.
\par
Many representation theoretical information can be recovered from this reduced decomposition.
\begin{enumerate}
\item The Auslander-Reiten quiver can be constructed in the following way:
\begin{itemize}
\item[-] the vertices are $1,2,\cdots,N$;
\item[-] there are arrows $k\ra\ell$ and $\ell\to k[1]$ (when $k[1]$ exists)  if there exists an arrow $i_\ell\ra i_k$ in $Q$, and $\ell$ is minimal along those indices larger then $k$ with this property;
\item[-] there is a translation $i_k\dashrightarrow i_\ell$ if $\ell=k[1]$.
\end{itemize}
\item for $k\leq\ell$, the dimension $[U_k,U_\ell]$ equals $\langle U_k,U_\ell\rangle$;
\item for $k\leq\ell$, the dimension $[U_\ell,U_k]^1$ equals $-\langle U_\ell,U_k\rangle$.
\end{enumerate}

We define a partial order $U\preceq V$ on the indecomposable modules if there exists a path from $U$ to $V$ in the Auslander-Reiten quiver. By the above argument, this partial order is compatible with the directed enumeration, that is to say, if $U_k\preceq U_\ell$ then $k\leq\ell$.

\subsection{Hall algebras} 
We consider the (generic) Hall algebra $\mathcal{H}(Q)$ of $Q$. It is the $\mathbb{Q}(q)$-algebra with basis $F_{[V]}$ for $V$ ranging over all isomorphism classes of finite dimensional $\mathbb{K}Q$-modules. The multiplication is defined by:
$$F_{[V]}F_{[W]}=\sum_{[X]}q^{[V,V]+[W,W]+\langle\mathbf{dim}V,\mathbf{dim}W\rangle-[X,X]}H_{V,W}^X(q^2) F_{[X]},$$
where $H_{V,W}^X$ is the Hall polynomial counting the number of sub-modules $U$ of $X$ which are isomorphic to $W$, with quotient $X/U$ isomorphic to $V$ over $\mathbb{F}_q$. 
\par
We summarises some basic facts about these Hall polynomials:
\begin{enumerate}
\item $H_{V,W}^X(q)\neq 0$ if and only if there exists a short exact sequence $0\ra W\ra X\ra V\ra 0$.
\item If $[V,W]=0$ and $X$ is an extension of $V$ by $W$ then $H_{V,W}^X(q)=1$.
\item If $[W,V]=0$ then $H_{W,V}^{W\oplus V}(q)=q^{[V,W]}$.
\end{enumerate}

The algebra $\mathcal{H}(Q)$ is isomorphic to $U_q(\mathfrak{n}^-)$ \cite{Rin90}. The isomorphism is given by $F_{[S_i]}\mapsto F_i$. For an indecomposable module $U$, we denote the image of $F_{[U]}$ by $F_{\mathbf{dim}\,U}$.
\par
Given a finite dimensional $\mathbb{K}Q$-module $M$, we can decompose it as 
$$M=\bigoplus_{k=1}^N U_k^{m_k}.$$
Then in the Hall algebra we have 
$$F_{[M]}=F_{[U_1]}^{(m_1)}F_{[U_2]}^{(m_2)}\cdots F_{[U_N]}^{(m_N)}.$$

For two indecomposable modules $U\preceq V$, their $q$-commutator is given by:
$$F_{[V]}F_{[U]}-q^{[U,V]-[V,U]^1}F_{[U]}F_{[V]}=\text{terms coming from non-split extensions of $V$ by $U$}.$$

We define the $q$-commutator by:
$$[F_{[V]},F_{[U]}]_q:=F_{[V]}F_{[U]}-q^{[U,V]-[V,U]^1}F_{[U]}F_{[V]}.$$
\begin{enumerate}
\item One special case is when $[V,U]^1=0$ then $[F_{[V]},F_{[U]}]_q=0$.
\item The other special case is when $U=\tau V$ is the Auslander-Reiten translation of $V$. In this case there exists exactly one non-split extension $X$ of $V$ by $U$ given by the middle term of the Auslander-Reiten sequence, and the $q$-commutator can be computed as
$$[F_{[V]},F_{[U]}]_q=(q-q^{-1})^{[U,V]}F_{[X]}.$$
\end{enumerate}
We translate these results in $\mathcal{H}(Q)$ to $U_q(\mathfrak{n}^-)$ using the algebra isomorphism described above. Under this isomorphism, $F_{[U_k]}$ is sent to the quantum PBW root vector $F_{\beta_k}$ corresponding to the reduced decomposition arising from the enumeration of positive roots. Then the above formulae can be rewritten as
\begin{enumerate}
\item If $\langle\beta_\ell,\beta_k\rangle=0$, then $F_{\beta_\ell}F_{\beta_k}=F_{\beta_k}F_{\beta_\ell}$.
\item If $1\leq k<\ell\leq N$ and $\ell=k[1]$, then
$$[F_{\beta_\ell},F_{\beta_k}]_q=(q-q^{-1})^{\langle\beta_k,\beta_\ell\rangle}\prod_{k<p<\ell,\ \langle\alpha_{i_p},\alpha_{i_k}\rangle\neq 0}F_{\beta_p}.$$
\end{enumerate}

We describe the terms appearing in the $q$-commutator.

\begin{lemma}\label{Lem:degenerate}
The following statements are equivalent:
\begin{enumerate}
\item a term $F_{[X]}$ appears in $[F_{[V]},F_{[U]}]_q$;
\item $X$ is a non-split extension of $V$ by $U$;
\item $X$ properly degenerates to $U\oplus V$;
\item for all indecomposable modules $Z$, we have $[Z,X]\leq [Z,U]+[Z,V]$, with strict inequality for some $Z$.
\end{enumerate}
\end{lemma}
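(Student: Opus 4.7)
The plan is to establish (1) $\Leftrightarrow$ (2) by a direct Hall-algebra computation, and then obtain (2) $\Leftrightarrow$ (3) $\Leftrightarrow$ (4) using the Bongartz--Zwara theory of module degenerations for representation-directed algebras.

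For (1) $\Leftrightarrow$ (2): I would expand the $q$-commutator via the multiplication formula of $\mathcal{H}(Q)$. The product $F_{[V]}F_{[U]}$ is a positive $\mathbb{Z}[q^{\pm 1}]$-linear combination of $F_{[X]}$ as $X$ ranges over extensions of $V$ by $U$, the coefficient of $F_{[X]}$ being non-zero iff $H_{V,U}^X\neq 0$, equivalently iff such an extension exists (fact~(1)). Because $U\preceq V$ forces $\Ext^1(U,V)=0$, the product $F_{[U]}F_{[V]}$ collapses to $F_{[U\oplus V]}$ by the PBW identity $F_{[M]}=F_{[U_1]}^{(m_1)}\cdots F_{[U_N]}^{(m_N)}$ recalled before the lemma. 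A bookkeeping computation with the exponents in the multiplication formula and fact~(3) then shows that the scalar $q^{[U,V]-[V,U]^1}$ is precisely the one that cancels the split term $F_{[U\oplus V]}$ in $F_{[V]}F_{[U]}$. Consequently, $F_{[X]}$ appears with non-zero coefficient in $[F_{[V]},F_{[U]}]_q$ if and only if $X$ is a non-split extension of $V$ by $U$.

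For (2) $\Leftrightarrow$ (3) $\Leftrightarrow$ (4), the argument is purely representation-theoretic. The step (2) $\Rightarrow$ (3) is the classical $\mathbb{K}^\times$-scaling of a non-split cocycle, which produces a proper degeneration $X \rightsquigarrow U\oplus V$. For (3) $\Rightarrow$ (4), upper semi-continuity of $[Z,-]$ on the variety of representations yields $[Z,X]\leq [Z,U]+[Z,V]$ for every $Z$; properness forces at least one strict inequality, since in the Dynkin setting a module is determined up to isomorphism by its hom-vectors from indecomposables. The converse (4) $\Rightarrow$ (3) is Zwara's theorem, which identifies the hom-order with the degeneration order for representation-finite algebras. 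Finally, (3) $\Rightarrow$ (2) is the Dynkin-specific theorem of Bongartz \cite{B}: any proper degeneration of $X$ to a direct sum $U\oplus V$ of indecomposables with $\Ext^1(U,V)=0$ is realized by a non-split short exact sequence $0\to U\to X\to V\to 0$.

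The step I expect to be the main obstacle is (3) $\Rightarrow$ (2): converting an orbit-closure statement into the concrete datum of an explicit non-split extension rests on the full force of Bongartz's analysis of degenerations for Dynkin quivers. The Hall-algebra step (1) $\Leftrightarrow$ (2), while elementary in spirit, still requires a careful verification that the particular twist $q^{[U,V]-[V,U]^1}$ cancels the split term in $F_{[V]}F_{[U]}$ and leaves every non-split term with a non-zero coefficient.
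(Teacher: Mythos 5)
Your proposal is correct and follows essentially the same route as the paper: the Hall-algebra expansion for $(1)\Leftrightarrow(2)$ (the paper states the cancellation of the split term $F_{[U\oplus V]}$ without the explicit exponent bookkeeping you supply), and appeals to Bongartz's results on degenerations of modules over representation-directed algebras for $(2)\Leftrightarrow(3)\Leftrightarrow(4)$. The only cosmetic difference is that the paper cites \cite[Theorem 4.5(a)]{B} and \cite[Proposition 3.2]{B} directly, whereas you unpack the scaling and semicontinuity directions and attribute the hom-order/degeneration-order equivalence to Zwara rather than Bongartz; for representation-finite (indeed representation-directed) algebras both attributions are valid.
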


\begin{proof}
The first two statements are equivalent by the above argument. The second and third statements are equivalent by \cite[Theorem 4.5 (a)]{B}. The third and the fourth statements are equivalent by \cite[Proposition 3.2]{B}.
\end{proof}

\begin{proposition}\label{Prop:degenerate}
A term $F_{[X]}$ appears in $[F_{[V]},F_{[U]}]_q$ if and only if the following conditions hold:
\begin{enumerate}
\item $\mathbf{dim} X=\mathbf{dim} U+\mathbf{dim}V$;
\item $U\prec Y\prec V$ for every indecomposable direct summand $Y$ of $X$;
\item $[Z,X]\leq [Z,V]$ for all indecomposable modules $Z$ such that $\tau^{-1}U'\preceq Z\prec V$, with strict inequality for at least one such $Z$.
\end{enumerate}
\end{proposition}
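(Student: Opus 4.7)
The proposition refines Lemma \ref{Lem:degenerate} by restricting the test modules $Z$ in the Hom-inequality criterion and adding a structural description of the summands of $X$. My plan is to establish that conditions (1)--(3) are jointly equivalent to Lemma \ref{Lem:degenerate}(4). Condition (1) is immediate in both directions from $X$ being a non-split extension of $V$ by $U$, so the work lies in conditions (2) and (3).

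For condition (2), suppose $X$ sits in a non-split short exact sequence $0\to U\to X\to V\to 0$. I would take an indecomposable summand $Y$ of $X$ and trace the compositions $Y\hookrightarrow X\to V$ and $U\to X\twoheadrightarrow Y$; using $\End(Y)=\mk$ and representation-directedness, each is either zero or forces a strict AR-inequality, with the zero cases yielding $Y\in\{U,V\}$. But $U$ and $V$ cannot be summands of a non-split extension in Dynkin type -- the directed enumeration makes $\Ext^1(V,U)\neq 0$ incompatible with $\Hom(V,U)\neq 0$, which rules out $V$ as a summand, and symmetrically $U$ -- so $U\prec Y\prec V$.

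For condition (3), the observation that $\tau^{-1}U\preceq Z$ forces $Z\not\preceq U$, hence $[Z,U]=0$, reduces the universal inequality of Lemma \ref{Lem:degenerate}(4) on the restricted range to $[Z,X]\leq [Z,V]$. To show this restriction loses no information, I would verify that, under (1) and (2), the equality $[Z,X]=[Z,U]+[Z,V]$ holds for all $Z$ \emph{outside} the strip $\tau^{-1}U\preceq Z\prec V$, so no strict-inequality witness can lie outside. The two natural cases are: for $Z\succeq V$, condition (2) gives $[Z,Y]=0$ for each indecomposable summand $Y$ of $X$, and both sides of the inequality vanish; for $Z\prec V$ with $\tau^{-1}U\not\preceq Z$, AR-duality $\Ext^1(Z,U)\cong D\Hom(U,\tau Z)=0$ makes the long exact sequence obtained by applying $\Hom(Z,-)$ to $0\to U\to X\to V\to 0$ short-exact, again giving equality.

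\textbf{Main obstacle.} The delicate step is the second case above, especially handling $Z$ near the AR boundary -- in particular $Z=V$ itself and $Z$ on the edge of the AR orbit of $U$ -- where the connecting morphism can be nontrivial and in principle produce a strict drop $[Z,X]<[Z,U]+[Z,V]$. The propagation I need is that any such boundary drop is mirrored by a drop at some $Z'$ strictly inside the strip $\tau^{-1}U\preceq Z'\prec V$, via the AR-quiver combinatorics together with condition (2). Formalising this propagation -- so that condition (3)'s strict-inequality clause genuinely captures every strict-inequality witness of Lemma \ref{Lem:degenerate}(4) -- is where the core technical work sits, and it is precisely this analysis that explains why the boundary $\tau^{-1}U$ (rather than $U$) is the correct cutoff.
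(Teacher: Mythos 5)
Your overall reduction is the same as the paper's: show that conditions (1)--(3) are equivalent to the Hom-inequality criterion of Lemma \ref{Lem:degenerate}(4), and for that verify the inequality ``trivially'' outside the strip $\tau^{-1}U\preceq Z\prec V$. The necessity argument for (2) is close to the paper's (the paper only says the summand would split off; your extra care ruling out $Y\in\{U,V\}$ via $\End(Y)=\mk$ and non-splitness is a worthwhile addition). However, your treatment of the case $Z\prec V$, $\tau^{-1}U\not\preceq Z$ has a genuine gap: you apply $\Hom(Z,-)$ to the short exact sequence $0\to U\to X\to V\to 0$ and use $\Ext^1(Z,U)=0$ to make the resulting sequence short exact. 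This is fine in the \emph{necessity} direction (where the sequence is known to exist), but in the \emph{sufficiency} direction you are trying to prove precisely that such a sequence exists, so invoking it is circular. The paper sidesteps this with the identity $[Z,X]-[Z,U\oplus V]=[X,\tau Z]-[U\oplus V,\tau Z]$, valid whenever $\mathbf{dim}X=\mathbf{dim}(U\oplus V)$; this follows from the Euler form being a bilinear form on dimension vectors together with AR duality, and it needs no exact sequence relating $X$, $U$, $V$. With $[U,\tau Z]=0$ from $\tau^{-1}U\not\preceq Z$, the inequality transfers to the $\tau$-shifted side. You should reformulate your $\Ext$-vanishing step in these dimension-vector terms.

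Two smaller points. First, your case split for ``$Z$ outside the strip'' is $Z\succeq V$ versus $Z\prec V$ with $\tau^{-1}U\not\preceq Z$; this misses $Z$ incomparable to $V$. The paper's condition $Z\npreceq V$ is the right one (and the same argument works: all of $[Z,X]$, $[Z,U]$, $[Z,V]$ vanish because every summand $Y$ of $X$ satisfies $Y\prec V$). Second, your claim that \emph{equality} $[Z,X]=[Z,U]+[Z,V]$ holds for all $Z$ outside the strip is false for $Z=V$: there $[V,X]=0$ by condition (2) (no summand $Y$ of $X$ admits $V\preceq Y$), while $[V,V]+[V,U]=1$, so the inequality is strict. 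This does not break sufficiency (an inequality is all you need), but it does mean your ``no strict-inequality witness can lie outside'' reasoning for the necessity of (3)'s strictness clause is not sound as stated --- the witness $Z=V$ always lies outside the strip, and showing a witness also lies inside requires a separate argument (this is in fact a delicate point of the proposition, and is related to whether the range in (3) should be $\preceq V$ rather than $\prec V$). Your ``main obstacle'' paragraph correctly senses this difficulty but does not supply the argument.
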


\begin{proof}
By Lemma \ref{Lem:degenerate}, the first and the third conditions are necessary is clear. For the second condition, note that every such $Y$ has to admit a non-zero map from $U$ and to $V$, otherwise it would split off from the exact sequence $0\ra U\ra X\ra V\ra 0$, which is impossible since $U$ and $V$ are indecomposable.
\par
To show the sufficiency, it is enough to prove that $[Z,X]\leq [Z,U\oplus V]$ holds trivially for all other $Z$. If $Z\npreceq V$ then $[Z,V]=[Z,U]=0$. The case $\tau^{-1}U\npreceq Z$ can be treated using the Auslander-Reiten formula $[V,W]-[V,W']=[W,\tau V]-[W',\tau V]$ for $W$, $W'$ of the same dimension vector.
\end{proof}

\begin{conjecture}
In the above Proposition, the condition (3) is superfluous.
\end{conjecture}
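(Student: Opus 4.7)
The conjecture asserts that conditions (1) and (2) of Proposition 1 already force condition (3). By Lemma 1, condition (3) (together with (1)) is equivalent to $X$ being the middle term of a non-split short exact sequence $0 \to U \to X \to V \to 0$. So the plan is to show: given any module $X$ with $\mathbf{dim}\,X = \mathbf{dim}\,U + \mathbf{dim}\,V$ whose indecomposable summands $Y$ all satisfy $U \prec Y \prec V$, there exists a non-split extension of $V$ by $U$ with middle term $X$.

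The proof of Proposition 1 already handles indecomposable $Z$ with $Z \not\preceq V$ or $\tau^{-1}U \not\preceq Z$, using either $[Z,V] = [Z,U] = 0$ or the Auslander-Reiten formula. Thus one reduces to verifying $[Z, X] \leq [Z, V]$ for $Z$ with $\tau^{-1}U \preceq Z \prec V$, with strict inequality for at least one such $Z$. Note that in this range $[Z, U] = 0$ by representation-directedness, since $\tau^{-1}U \preceq Z$ rules out $Z \preceq U$, so the inequality $[Z, X] \leq [Z, V]$ is equivalent to $[Z, X] \leq [Z, U \oplus V]$.

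The main approach I would pursue is via the mesh relations in the Auslander-Reiten quiver. For each interior $Z$ in the relevant interval, the mesh relation expresses $[Z, Y] + [\tau^{-1}Z, Y]$ as a sum over the arrows leaving $Z$ in the AR quiver, for each indecomposable $Y$. Summing over the summands of $X$, one can compute $[Z, X]$ inductively in the $\preceq$-order, propagating from $Z$'s closer to $V$ (where equality is forced by the condition on summands) backwards. For type $\tt A$, where the AR quiver is a rectangle with each interior vertex having exactly two arrows in and two out, this induction should yield $[Z, X] \leq [Z, V]$ by a direct combinatorial argument; the strict inequality at some $Z$ should come from taking $Z$ to be the AR-predecessor of a summand of $X$ closest to $V$.

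The main obstacle will be extending this beyond type $\tt A$ to types $\tt D$ and $\tt E$, where the AR quiver has branch vertices and the mesh relations become more intricate to bookkeep. A more conceptual strategy, avoiding case-by-case analysis, would be to construct the non-split extension directly by induction on the number of indecomposable summands of $X$: choose a summand $Y_1$ minimal in $\preceq$ among the summands, realize it as the middle term of a non-split extension $0 \to U \to Y_1 \to W \to 0$ using representation-directedness and the AR formula, and then embed the complementary summand $X'$ into a non-split extension of $V$ by $W$ via the induction hypothesis; pulling back these two sequences along $W$ yields the desired extension with middle term $X$. The delicate step here is verifying that the triple $(W, V, X')$ again satisfies analogues of (1) and (2), so that the induction can be carried out.
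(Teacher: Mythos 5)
This statement is posed as a conjecture in the paper and carries no proof; you are not reproducing an argument but proposing a new one, and your proposal is a sketch with acknowledged open steps rather than a proof. The first approach (mesh relations) is only outlined for type $\tt A$, and even there the induction is not carried out, so it does not yet establish the claim in any case.

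The second, ``more conceptual'' approach has a structural flaw beyond the delicate step you flag. You propose to choose a $\preceq$-minimal indecomposable summand $Y_1$ of $X$ and realize it as the middle term of a non-split short exact sequence $0 \to U \to Y_1 \to W \to 0$. But the map $U \to X$ in the extension need not restrict to an injection $U \hookrightarrow Y_1$ into any single summand; it generically has nonzero components into several summands, and a minimal summand may even have total dimension smaller than that of $U$. Concretely, for the linear $\tt A_4$ quiver with $U = U_{24}$, $V = U_{13}$, the only $X$ satisfying (1) and (2) is $X = U_{14}\oplus U_{23}$, arising from the Auslander--Reiten sequence $0 \to U_{24} \to U_{14}\oplus U_{23} \to U_{13} \to 0$. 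Here $U_{24}$ cannot inject into $U_{23}$ on dimension grounds, so the base step fails for that choice of $Y_1$; and if instead $Y_1 = U_{14}$, then $W = S_1$, but $\Ext^1(U_{13}, S_1) = 0$ (since $\Hom(S_1,\tau U_{13}) = \Hom(S_1, U_{24}) = 0$), so the second step of your construction produces no non-split sequence to splice, and the complementary summand $U_{23}$ has the wrong dimension vector to play the role you assign it. Any successful proof would have to engage with how $U$ is distributed across the summands of $X$, not peeled off from one of them; the mesh-relation bookkeeping on the Hom-hemisphere bounded by $U$ and $V$, or a direct appeal to Bongartz's codimension-one degeneration theory for representation-directed algebras, seem more promising than the inductive splicing you describe.
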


\subsection{Comparison of cones: quiver-compatible case}
Taking the dimension vector induces a map $\mathbf{dim}:K_0^\oplus\ra K_0$. Let $\Lambda$ denote its kernel. Then we have the short exact sequence of free abelian groups $0\ra\Lambda\ra K_0^\oplus\ra K_0\ra 0$. Dualizing by taking $(-)^*:=\Hom(-,\mathbb{Z})$ we obtain an short exact sequence 
$$0\ra K_0^*\ra (K_0^\oplus)^*\ra\Lambda^*\ra 0.$$
Notice that $\Lambda^*$ consists of additive functions on $K_0^\oplus$ modulo those functions depending only on the dimension vectors of modules. 

\begin{lemma}\label{Lem:Lambda}
Every element in $\Lambda$ is of form $[W]-[V]$ where $W$ and $V$ are two $\mathbb{K}Q$-modules of the same dimension vector.
\end{lemma}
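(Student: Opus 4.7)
The plan is to prove this by the natural decomposition of an element of $\Lambda$ into its positive and negative parts.

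Recall that $K_0^\oplus$ is freely generated by the isomorphism classes $[U_k]$ of the indecomposable $\mk Q$-modules, so an arbitrary element of $K_0^\oplus$ has a unique expression $\sum_{k=1}^{N} a_k [U_k]$ with $a_k\in\mathbb{Z}$. The map $\mathbf{dim}$ sends this element to $\sum_k a_k\,\mathbf{dim}\,U_k\in K_0$. So an element lies in $\Lambda$ precisely when $\sum_k a_k\,\mathbf{dim}\,U_k=0$ in the root lattice.

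Given such an element, I would split the index set into $I^+=\{k : a_k>0\}$ and $I^-=\{k : a_k<0\}$, and define the honest $\mk Q$-modules
\[
W:=\bigoplus_{k\in I^+} U_k^{\,a_k},\qquad V:=\bigoplus_{k\in I^-} U_k^{\,-a_k}.
\]
By definition of the split Grothendieck group, $[W]=\sum_{k\in I^+}a_k[U_k]$ and $[V]=\sum_{k\in I^-}(-a_k)[U_k]$ in $K_0^\oplus$, whence $[W]-[V]=\sum_k a_k[U_k]$, recovering the given element. Taking dimension vectors and using that the original element lies in the kernel $\Lambda$ yields $\mathbf{dim}\,W-\mathbf{dim}\,V=0$, so $W$ and $V$ have the same dimension vector, as required.

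There is no real obstacle here; the statement is essentially a reformulation of the definition of $\Lambda$ together with the fact that $K_0^\oplus$ has a basis of indecomposables so that every formal $\mathbb{Z}$-combination of indecomposables can be written as a difference of two genuine modules. The only minor point to keep track of is to confirm the uniqueness of the representation in $K_0^\oplus$ (ensuring that the splitting into positive and negative parts is well-defined), which is immediate from the Krull--Schmidt theorem applied in $\mathrm{mod}(\mk Q)$.
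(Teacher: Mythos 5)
Your proof is correct and is essentially the same argument as the paper's: split the unique expansion $\sum_k a_k[U_k]$ into its positive and negative parts, build $W$ and $V$ as the corresponding direct sums, and observe that the kernel condition forces $\mathbf{dim}\,W=\mathbf{dim}\,V$. The paper states this more tersely (``gathering the positive and the negative coefficients''), but the content is identical.
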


\begin{proof}
Let $x=\sum_{[U]}a_U [U]\in K_0^\oplus$ where the sum is over the isomorphism classes of indecomposable $\mathbb{K}Q$-modules. Gathering the positive and the negative coefficients among $a_U$, the element $x$ can be written as $[W]-[V]$ where $W$ and $V$ are $\mathbb{K}Q$-modules. That this element belongs to $\Lambda$ just means that $\mathbf{dim}V=\mathbf{dim}W$.
\end{proof}

\begin{theorem}\label{Thm:LSQuiver}
Let $Q$ be a quiver of type $\tt A$, $\tt D$ or $\tt E$ and $\underline{w}_0\in R(w_0)$ be compatible with $Q$. Then 
$$\mathcal{L}_{\underline{w}_0}^-(\g^L)=\mathcal{D}_{\underline{w}_0}^q(\g).$$
\end{theorem}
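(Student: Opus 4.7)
By Corollary~\ref{Cor:subset}, combined with $\g=\g^L$ in the simply-laced case, one inclusion is already in hand, so the task is to establish $\mathcal{L}_{\underline{w}_0}^-(\g)\subseteq\mathcal{D}_{\underline{w}_0}^q(\g)$. The plan is to transport both cones to the Hall-algebra side and interpret their defining inequalities as estimates on an additive function on the split Grothendieck group. Concretely, a point $\mathbf{d}=(d_{\beta_1},\ldots,d_{\beta_N})$ is extended to a function $d\colon K_0^\oplus\to\mathbb{R}$ by $d\bigl(\bigl[\bigoplus_k U_k^{m_k}\bigr]\bigr)=\sum_k m_k d_{\beta_k}$.

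Under this dictionary, the defining inequality of $\mathcal{L}_{\underline{w}_0}^-(\g)$ associated to an index $p$ with $p'=p[1]$ becomes precisely the almost split inequality $d([\tau U_{p'}])+d([U_{p'}])\geq d([E_{p'}])$, since in the simply-laced case the middle term of the almost split sequence $0\to U_p\to E_{p'}\to U_{p'}\to 0$ decomposes as $\bigoplus_{p<s<p',\,\langle\alpha_{i_s},\alpha_{i_p}\rangle\neq 0}U_s$ by the formula recalled in Section~\ref{s4}. On the other side, Proposition~\ref{Prop:degenerate} identifies the defining inequalities of $\mathcal{D}_{\underline{w}_0}^q(\g)$ as
$$d([U_k])+d([U_\ell])\geq d([X]),$$
indexed by pairs $k<\ell$ together with proper degenerations $X$ of $U_k\oplus U_\ell$. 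The theorem therefore reduces to the claim that every such degeneration inequality is implied by the almost-split ones.

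To prove this, I would refine each proper degeneration $M>N$ (applied to $M=U_k\oplus U_\ell$, $N=X$) into a chain $M=M_0>M_1>\cdots>M_r=N$ in which every step is an \emph{elementary AR move}: one selects a non-projective indecomposable summand $V_i$ of $M_i$ together with a summand $\tau V_i$, and replaces $\tau V_i\oplus V_i$ by the middle term of the almost split sequence at $V_i$. At each step, adding $d$ of the common complementary summand to the AR inequality $d([\tau V_i])+d([V_i])\geq d([E_{V_i}])$ gives $d([M_i])\geq d([M_{i+1}])$; telescoping then yields $d([M])\geq d([N])$, as required.

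The main obstacle is thus the refinement statement. Relying on Bongartz's hom-criterion for the degeneration order in Dynkin type \cite{B}, I expect to induct on the nonnegative defect $\delta(M,N):=\sum_Z([Z,N]-[Z,M])$ (with $Z$ running over indecomposables): given $M>N$, one must produce a non-projective indecomposable summand $V$ of $M$ whose almost split sequence through $V$ produces an intermediate module $M'$ with $M>M'\geq N$ and $\delta(M',N)<\delta(M,N)$. Locating such a $V$ by choosing it $\preceq$-minimal among the indecomposable summands of $M$ contributing to the defect, and using the directedness of $\mathrm{mod}(\mk Q)$ together with the AR-formula to control how the Hom-numbers change, is the technical heart of the argument.
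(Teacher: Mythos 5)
Your translation of both cones into inequalities on additive functions on $K_0^\oplus$ is accurate, and you correctly recognize (for simply-laced type) that the $\mathcal{L}^-_{\underline{w}_0}$ inequalities are exactly the almost split ones and that the $\mathcal{D}^q_{\underline{w}_0}$ inequalities are the degeneration/extension ones. But the key step you propose — refining every proper degeneration $X > U\oplus V$ into a chain of \emph{elementary AR moves} — is a genuine gap, not a routine induction. Bongartz's result \cite[Cor.~4.2]{B}, which the paper uses, only gives a chain of $\leq_{\mathrm{ext}}$-steps, i.e.\ replacing a module by the endterms of an arbitrary non-split short exact sequence; it does not say those sequences can be taken to be almost split, and producing such a refinement (your "technical heart") is an open subproblem in your write-up, with no reference and no argument. (There is also a sign slip: to descend from $X$ to $U\oplus V$ in the degeneration order you must replace the middle term $E_{V_i}$ by $\tau V_i\oplus V_i$, not the other way around; and you need the middle term itself to appear as a direct summand of $M_i$, which is a genuinely nontrivial structural requirement.)

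The paper avoids this entirely. It shows the extension cone $\mathcal{E}\subset\Lambda_{\mathbb{R}}$ equals its double dual via Bongartz's $\leq_{\mathrm{ext}}$-characterization, and invokes \cite[Thm.~4(1)]{FFR16} that the dual cone $\mathcal{D}=\mathcal{E}^\vee$ is \emph{simplicial}, freely generated by the functionals $[U,-]$ for $U$ non-projective indecomposable. Since the AR-sequence elements $e_W=[\tau W]+[W]-[E_W]$ are precisely the dual basis (by the AR defect formula $[Z,\tau W]+[Z,W]-[Z,E_W]=\delta_{Z,W}$), simpliciality of $\mathcal{D}$ forces the AR-elements to be the extremal rays of $\mathcal{E}$, and dualizing gives $\mathcal{L}^-_{\underline{w}_0}=\mathcal{D}^q_{\underline{w}_0}$. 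In particular, all you actually need from your direction is that $[U]+[V]-[X]$ is a \emph{nonnegative real} combination of the $e_W$; this follows immediately from the dual-basis identity together with the Hom-criterion for degenerations, with no chain of modules required. You should either adopt that linear-algebraic step, or supply a precise proof (or a reference) that AR-moves generate the degeneration order in Dynkin type before the chain argument can stand.
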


We prove this theorem in the rest of this section. Let $\Lambda_\mathbb{R}:=\Lambda\ts_\mathbb{Z}\mathbb{R}$.
\par
We define a cone $\mathcal{E}\subset\Lambda_\mathbb{R}$ generated by $[V]+[W]-[X]$ when there exists a short exact sequence $0\ra V\ra X\ra W\ra 0$. Let $\mathcal{D}:=\mathcal{E}^\vee\subset\Lambda_\mathbb{R}^*$ be the dual cone of $\mathcal{E}$, that is,
$$\mathcal{D}:=\{\vp\in\Lambda_\mathbb{R}^*\mid \vp(x)\geq 0\text{ for any }x\in \mathcal{E}\}.$$
That is to say, if $\vp\in\mathcal{D}$, then $\vp([V\oplus W])=\vp([V])+\vp([W])$ and for any short exact sequence $0\ra V\ra X\ra W\ra 0$, $\vp([X])\leq \vp([V])+\vp([W])$.

Using the terminology in \cite[Definition 2]{FFR16}, such a function $\vp$ is an admissible function on isomorphism classes of $\mathbb{K}Q$-modules. Then by \cite[Theorem 4(1)]{FFR16}, the cone $\mathcal{D}$ is freely generated by the functions $[U,-]$ for $U$ a non-projective indecomposable $\mathbb{K}Q$-module (since if $U$ is projective, then $[U,-]$ only depends on the dimension vector).

\begin{proposition}
We have $\mathcal{E}=\mathcal{D}^\vee$.
\end{proposition}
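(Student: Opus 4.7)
The plan is to prove the non-trivial inclusion $\mathcal{D}^\vee \subseteq \mathcal{E}$ via the Hom-characterization of module degenerations; the reverse inclusion $\mathcal{E} \subseteq \mathcal{D}^\vee = (\mathcal{E}^\vee)^\vee$ is the tautology of cone biduality.

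First I would exploit the result from \cite{FFR16} recalled just above, namely that $\mathcal{D}$ is freely generated by the functionals $[U,-]$ as $U$ ranges over the non-projective indecomposable $\mathbb{K}Q$-modules, of which there are only finitely many since $Q$ is Dynkin. This makes $\mathcal{D}^\vee$ a rational polyhedral cone in $\Lambda_\mathbb{R}$, cut out by the inequalities $[U,-]\geq 0$, and every point of $\mathcal{D}^\vee$ is a non-negative real combination of the primitive integral generators of its finitely many extreme rays. Since $\mathcal{E}$ is itself a convex cone, it therefore suffices to check the inclusion on integer points, i.e.\ to prove $\mathcal{D}^\vee \cap \Lambda \subseteq \mathcal{E}$.

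Given an integer point $y \in \mathcal{D}^\vee \cap \Lambda$, I would invoke Lemma \ref{Lem:Lambda} to write $y = [V] - [W]$ with $\mathbf{dim}\,V = \mathbf{dim}\,W$; the defining inequalities of $\mathcal{D}^\vee$ then translate into $[U,V]\geq [U,W]$ for every non-projective indecomposable $U$, and the projective indecomposable case holds automatically because $[P,-]$ depends only on the dimension vector, which agrees on $V$ and $W$. Bongartz's Hom-criterion, used in the proof of Lemma \ref{Lem:degenerate} via \cite[Proposition 3.2]{B}, converts these inequalities into the statement that $W$ degenerates to $V$. Appealing to Zwara's theorem (valid here since $\mathbb{K}Q$ has finite representation type), this degeneration is realized by a single short exact sequence $0\to Z\to W\oplus Z\to V\to 0$ for some module $Z$, and the associated generator of $\mathcal{E}$ reads
\[
[Z]+[V]-[W\oplus Z]=[V]-[W]=y,
\]
placing $y$ in $\mathcal{E}$ as required.

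The main obstacle is this last step: extracting an honest short exact sequence from the mere Hom-inequalities. Bongartz supplies only the criterion, and it is Zwara's theorem (equivalently, the Riedtmann--Zwara construction) that delivers the required sequence. A safer fallback avoiding an external Zwara citation would be to induct on the number of ``elementary'' degenerations factoring $W$ down to $V$, each such elementary step being directly of SES form by the equivalence $(2)\Leftrightarrow(3)$ of Lemma \ref{Lem:degenerate}, and then to sum the resulting elements of $\mathcal{E}$ telescopically so as to reconstitute $[V]-[W]$.
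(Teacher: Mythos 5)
Your proof is correct, and the overall architecture (cone biduality for the easy inclusion; Bongartz's Hom-criterion to turn a point of $\mathcal{D}^\vee$ into a module degeneration; realize the degeneration by short exact sequences) matches the paper. The one genuine divergence is in how the degeneration $W\leq_{\deg} V$ is converted into membership in $\mathcal{E}$. You invoke the Riedtmann--Zwara theorem to produce a \emph{single} short exact sequence $0\to Z\to Z\oplus W\to V\to 0$, whose associated generator $[Z]+[V]-[Z\oplus W]$ collapses to $[V]-[W]$ in one step. The paper instead cites Bongartz's Corollary 4.2, which for path algebras of Dynkin quivers identifies $\leq_{\deg}$ with the relation $\leq_{\mathrm{ext}}$, yielding a chain $V=V_0,\dots,V_s=W$ of elementary extension steps; the element $[W]-[V]$ is then expressed as a telescoping sum of generators of $\mathcal{E}$. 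Zwara's theorem is the more powerful general-purpose tool (valid over any artin algebra), while Bongartz's $\leq_{\mathrm{ext}}$ characterization is sharper in the representation-finite setting and keeps the argument self-contained within the reference \cite{B} already in play. Your ``fallback'' paragraph essentially rediscovers the paper's route, but as written it is a bit loose: the equivalence $(2)\Leftrightarrow(3)$ of Lemma \ref{Lem:degenerate} concerns extensions of \emph{indecomposable} modules, so it does not by itself furnish the chain of elementary SES steps between arbitrary $W$ and $V$ --- for that you really do need the global statement $\leq_{\deg}=\leq_{\mathrm{ext}}$, i.e.\ \cite[Corollary 4.2]{B}. A small bonus of your write-up is the explicit reduction to lattice points of the rational polyhedral cone $\mathcal{D}^\vee$, a step the paper leaves implicit when it starts directly from $x\in\Lambda\cap\mathcal{D}^\vee$.
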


\begin{proof}
Recall that 
$$\mathcal{D}^\vee=\{x\in\Lambda\mid \vp(x)\geq 0\text{ for any }\vp\in\mathcal{D}\}.$$
First it is clear that $\mathcal{E}\subset (\mathcal{E}^\vee)^\vee=\mathcal{D}^\vee$. Conversely, suppose that $x\in\Lambda$ belongs to $\mathcal{D}^\vee$. By Lemma \ref{Lem:Lambda}, we can write $x=[W]-[V]$ for $\mathbb{K}Q$-modules $W$ and $V$ with $\mathbf{dim}W=\mathbf{dim}V$. Then $x\in\mathcal{D}^\vee$ means that for any $\mathbb{K}Q$-module $U$, $[U,V]\leq [U,W]$. Using the language in \cite{B}, this implies that $V$ degenerates to $W$. By \cite[Corollary 4.2]{B}, it is equivalent to $V\leq_{\mathrm{ext}}W$, which is defined as follows (see \cite[Section 1]{B}): there exists a sequence $V=V_0,V_1,\cdots,V_s=W$ of $\mathbb{K}Q$-modules such that for any $k=0,1,\cdots,s-1$, there exists a short exact sequence with middle term $V_k$, whose end terms add up to $V_{k+1}$. This shows that in the following expression
$$[W]-[V]=[V_s]-[V_{s-1}]+[V_{s-1}]-[V_{s-2}]+\cdots +[V_1]-[V_0],$$
for each $1\leq k\leq s$, $[V_k]-[V_{k-1}]\in\mathcal{E}$, hence $x\in\mathcal{E}$.
\end{proof}

To complete the proof of Theorem \ref{Thm:LSQuiver}, it suffices to apply the equivalence between (1) and (2) in Lemma \ref{Lem:degenerate}.

\subsection{Example}\label{Sec:example}
Let $\g=\mathfrak{sl}_{n+1}$ be of type $\tt A_n$ with positive roots 
$$\Delta_+=\{\alpha_{i,j}=\alpha_i+\cdots+\alpha_j\mid 1\leq i\leq j\leq n\}.$$
We compute the quantum degree cone $\mathcal{D}_{\underline{w}_0^\mathrm{max}}^q(\g)$ for
$$\underline{w}_0^{\mathrm{max}}:=(n,n-1,n,n-2,n-1,n,\cdots,1,\cdots,n).$$
This reduced decomposition is compatible with the following quiver of type $\tt A_n$:
\[\xymatrix{
1\ar[r] & 2 \ar[r] & \ar[r]\cdots & n-1 \ar[r]& n.}\]
For simplicity we denote $d_{i,j}:=d_{\alpha_{i,j}}$ for $1\leq i\leq j\leq n$. By Theorem \ref{Thm:LSQuiver}, it suffices to compute $\mathcal{L}_{\underline{w}_0^{\mathrm{max}}}^-(\g^L)$, which is given by:
$$\text{for }1\leq i\leq n-1,\ \ d_{i,i+1}\leq d_{i,i}+d_{i+1,i+1};$$
$$\text{for }1\leq i<j-1\leq n-1,\ \ d_{i,j}+d_{i+1,j-1}\leq d_{i,j-1}+d_{i+1,j}.$$

\section{Tropical flag varieties}\label{s5}

\subsection{Tropical varieties}
For a polynomial
$$f:=\sum_{\mathbf{a}\in\mathbb{N}^{n+1}}\lambda_\mathbf{a} x^\mathbf{a}\in\mathbb{C}[x_0,x_1,\cdots,x_n],$$
we define its support $\mathrm{supp}(f)$ as those tuples $\mathbf{a}\in\mathbb{N}^{n+1}$ such that $\lambda_\mathbf{a}\neq 0$. The tropicalization of $f$, denoted by $\mathrm{trop}(f)$, is a map from $\mathbb{R}^{n+1}$ to $\mathbb{R}$ defined by:
$$\mathrm{trop}(f)(\mathbf{w}):=\min\{\mathbf{w}\cdot\mathbf{a}\mid \mathbf{a}\in\mathrm{supp}(f)\}.$$
Let $\mathbf{1}:=(1,1,\cdots,1)\in\mathbb{R}^{n+1}$ and $H:=\mathbb{R}^{n+1}/\mathbb{R}\mathbf{1}$. 
\par
Let $I\subset\mathbb{C}[x_0,x_1,\cdots,x_n]$ be a radical homogeneous prime ideal and $V(I)$ be the (irreducible) projective variety associated to $I$. The tropical variety associated to $V(I)$ is defined by:
$$\mathrm{trop}(V(I))=\bigcap_{f\in I}\{\mathbf{w}\in H\mid \text{the minimum in }\mathrm{trop}(f)(\mathbf{w})\text{ is attained at least twice}\}.$$

The tropical variety $\mathrm{trop}(V(I))$ is a fan of dimension $\dim(V(I))$; all vectors $\mathbf{v}$ in the relative interior of a cone $C$ give the same initial ideal $\mathrm{in}_C(I):=\mathrm{in}_\mathbf{v}(I)$; vectors in different cones give different initial ideals.
\par
A cone $C$ in $\mathrm{trop}(V(I))$ is called a maximal prime cone if $C$ is a cone of maximal dimension, and $\mathrm{in}_C(I)$ is a prime binomial ideal. These maximal prime cones produce flat degenerations of the variety $V(I)$ to the toric variety $V(\mathrm{in}_C(I))$.

\subsection{Tropical complete flag varieties}
We fix an integer $n>1$. Let $\mathrm{SL}_n$ be the special linear group defined over $\mathbb{C}$ and $B\subset\mathrm{SL}_n$ be the Borel sub-group of upper triangular matrices in $\mathrm{SL}_n$. Let $\mathcal{F}l_n:=\mathrm{SL}_n/\mathrm{B}$ be the complete flag variety and $\mathbb{C}^n$ be the natural representation of $\mathrm{SL}_n$. We consider the Pl\"ucker embedding of $\mathcal{F}l_n$ into a product of projective spaces
$$\mathcal{F}l_n\lhook\joinrel\longrightarrow \prod_{k=1}^{n-1}\mathbb{P}(\Lambda^k\mathbb{C}^n).$$
We fix the standard basis $e_1,\cdots,e_n$ of $\mathbb{C}^n$ and $I=\{i_1,\cdots,i_k\}$ such that $1\leq i_1<\cdots<i_k\leq n$. The linear function 
$$P_I:=(e_{i_1}\wedge e_{i_2}\wedge\cdots\wedge e_{i_k})^*\in(\Lambda^k\mathbb{C}^n)^*$$
is called a Pl\"ucker coordinate on the product of projective spaces. The complete flag variety $\mathcal{F}l_n$ is defined by the Pl\"ucker relations in this product of projective spaces.
\par
Let $\mathrm{trop}(\mathcal{F}l_n)$ be the tropical variety associated to the complete flag variety with respect to the Pl\"ucker embedding. More details and some small dimension examples of $\mathrm{trop}(\mathcal{F}l_n)$ can be found in \cite{BLMM}.

\subsection{From quantum degree cone to tropical flag variety}
We denote $[n]:=\{1,2,\cdots,n\}$, $S:=2^{[n]}\backslash\{\emptyset,[n]\}$ and recall that
$$\underline{w}_0^{\mathrm{max}}:=(n,n-1,n,n-2,n-1,n,\cdots,1,\cdots,n).$$
\par
For $I=\{i_1,\cdots,i_k\}\in S$ with $i_1<\cdots<i_k$, we define two tuples $p_1<\cdots<p_\ell$ and $q_\ell<\cdots<q_1$ by: 
$$\{p_1,\cdots,p_\ell\}=[k]\backslash I,\ \ \{q_1,\cdots,q_\ell\}=I\backslash [k].$$
Let $\g=\mathfrak{sl}_n(\mathbb{C})$. We define a map 
$$\vp:\mathcal{D}_{\underline{w}_0^\mathrm{max}}^q(\g)\ra\mathbb{R}^S$$
by sending $\mathbf{d}\in \mathcal{D}_{\underline{w}_0^\mathrm{max}}^q(\g)$ to a vector $\vp(\mathbf{d})$ whose coordinates $\vp(\mathbf{d})_I$ is given by:
\begin{enumerate}
\item if $I=[k]$ for some $1\leq k\leq n-1$, $\vp(\mathbf{d})_I:=0$;
\item otherwise $\vp(\mathbf{d})_I:=d_{p_1,q_1-1}+\cdots+d_{p_\ell,q_\ell-1}$, where $d_{i,j}:=d_{\alpha_{i,j}}$.
\end{enumerate}
\bigskip

Then one of the main theorems in \cite{FFFM} can be rephrased as:

\begin{theorem}[\cite{FFFM}]
The map $\vp$ maps $\mathcal{D}_{\underline{w}_0^\mathrm{max}}^q(\g)$ bijectively onto the closure of a maximal cone in $\mathrm{trop}(\mathcal{F}l_n)$.
\end{theorem}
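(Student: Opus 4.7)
The plan is to reduce the claim to explicit linear algebra on both sides. Since $\underline{w}_0^{\mathrm{max}}$ is compatible with the linearly oriented quiver $1\ra 2\ra\cdots\ra n$, I would first apply Theorem \ref{Thm:LSQuiver} together with Example \ref{Sec:example} to replace $\mathcal{D}_{\underline{w}_0^\mathrm{max}}^q(\g)$ by the negative tight monomial cone cut out by
\[
d_{i,i+1}\le d_{i,i}+d_{i+1,i+1},\qquad d_{i,j}+d_{i+1,j-1}\le d_{i,j-1}+d_{i+1,j}.
\]
This is a simplicial cone of dimension $|\Delta_+|=\binom{n}{2}$, with extreme rays that can be enumerated explicitly.

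Next, I would establish injectivity of $\vp$ by inverting it inductively on the ``gap structure'' of subsets $I\in S$. For a singleton $\{j\}$ with $j>1$ one reads off $\vp(\mathbf{d})_{\{j\}}=d_{1,j-1}$; for a general $I$, shrinking a maximal gap of $I$ produces a subset $I'$ such that the difference $\vp(\mathbf{d})_I-\vp(\mathbf{d})_{I'}$ recovers one of the $d_{p,q-1}$. Iterating recovers every coordinate $d_{i,j}$ as an alternating sum of the $\vp(\mathbf{d})_I$, so $\vp$ has rank $|\Delta_+|$, matching the expected dimension of a maximal cone in $\mathrm{trop}(\mathcal{F}l_n)$.

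Then I would verify the inclusion $\vp(\mathcal{D}_{\underline{w}_0^\mathrm{max}}^q(\g))\subseteq\mathrm{trop}(\mathcal{F}l_n)$ by tropicalizing the three-term (Desnanot-Jacobi type) Plücker relations. Substituting the gap-sequence formula for $\vp(\mathbf{d})_I$ into such a relation $P_AP_B=P_CP_D+P_EP_F$ reduces the tropical comparison to one of the box inequalities $d_{i,j}+d_{i+1,j-1}\le d_{i,j-1}+d_{i+1,j}$ from Example \ref{Sec:example}; this forces the tropical minimum to be attained at least at the two terms of the right-hand side. Combining this inclusion with the injectivity and dimension count of the previous step, and invoking the [FFFM] construction of a toric degeneration of $\mathcal{F}l_n$ parametrized by the same gap-sequence data, upgrades the inclusion to a bijection onto the closure of a maximal prime cone in $\mathrm{trop}(\mathcal{F}l_n)$.

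The principal obstacle is the combinatorial verification in the third step: one must choose a sufficient generating family of three-term Plücker relations and show that, under the gap-sequence rewriting, they collapse to exactly the defining inequalities of $\mathcal{D}_{\underline{w}_0^\mathrm{max}}^q(\g)$, rather than to strictly weaker ones. Once this matching is in place, injectivity, the dimension count, and the identification with [FFFM]'s maximal prime cone follow directly from Theorem \ref{Thm:LSQuiver} and the main theorem of [FFFM].
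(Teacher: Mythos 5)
The paper itself gives no independent proof of this theorem: it says explicitly that ``one of the main theorems in \cite{FFFM} can be rephrased as'' the stated result, so the proof is by citation. You are therefore attempting a genuinely new derivation rather than reproducing an argument from the paper, and the question is whether your plan closes.

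The reduction via Theorem \ref{Thm:LSQuiver} and Example \ref{Sec:example} is sound, and injectivity is in fact easier than your gap-shrinking induction suggests: for $1\le i\le j\le n-1$, taking $I=\{1,\ldots,i-1,j+1\}$ gives $[k]\setminus I=\{i\}$, $I\setminus[k]=\{j+1\}$, and hence $\vp(\mathbf{d})_I=d_{i,j}$ directly, so $\vp$ has a coordinate projection recovering each entry of $\mathbf{d}$.

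The genuine gap is your third step. Checking that the tropical minimum is attained twice on (a generating set of) three-term Pl\"ucker relations only places $\vp(\mathbf{d})$ in the \emph{tropical prevariety} cut out by those relations (the flag analogue of the Dressian), not in $\mathrm{trop}(\mathcal{F}l_n)$, since the definition of the tropical variety quantifies over \emph{all} $f$ in the Pl\"ucker ideal. Pl\"ucker relations are not known to form a tropical basis for flag varieties, so this inclusion cannot be obtained by inspecting a generating set. The correct mechanism is the one the paper implicitly uses: the weighted PBW degeneration constructed in \cite{FFFM} is a flat \emph{toric} degeneration, so for $\mathbf{d}$ in the interior of $\mathcal{D}_{\underline{w}_0^{\mathrm{max}}}^q(\g)$ the initial ideal $\mathrm{in}_{\vp(\mathbf{d})}(I)$ is a prime binomial ideal, in particular monomial-free, and the Fundamental Theorem of Tropical Geometry then yields $\vp(\mathbf{d})\in\mathrm{trop}(\mathcal{F}l_n)$ and simultaneously that the cone is maximal and prime. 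Once you invoke \cite{FFFM} for this, your Pl\"ucker-tropicalization step becomes redundant; as written, the argument is either incomplete (prevariety $\neq$ variety) or circular (it already assumes the [FFFM] degeneration whose content is essentially the theorem). I would drop step three, argue injectivity by the explicit coordinate recovery above, and derive both the inclusion and the maximal-prime property directly from the flatness and toricness of the [FFFM] degeneration together with the dimension count $\dim\mathcal{D}_{\underline{w}_0^{\mathrm{max}}}^q(\g)=|\Delta_+|=\dim\mathcal{F}l_n$.
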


\begin{remark}
\begin{enumerate}
    \item The maximal prime cone obtained as the image of $\varphi$ gives the flat toric degeneration of $G/B$ to the toric variety associated to the Feigin-Fourier-Littelmann-Vinberg polytopes \cite{FFFM}.
    \item It is shown in \cite{M} that changing the directions of the inequalities in the example in Section \ref{Sec:example} identifies other maximal prime cones in tropical flag varieties.
\end{enumerate}
\end{remark}


\end{document}